\newtheorem{theorem}{Theorem}[section]
\newtheorem{lemma}[theorem]{Lemma}
\newtheorem{definition}[theorem]{Definition}
\newtheorem{proposition}[theorem]{Proposition}
\newtheorem{remark}[theorem]{Remark}
\def\thetheorem{\thesection.\arabic{theorem}}
\def\thesection{\arabic{section}}
\def\theequation {\thesection.\arabic{equation}}
\def\beq{\begin{equation}\displaystyle}
\def\eeq{\end{equation}}
\def\bel{\begin{equation} \displaystyle \begin{array}{l} }
\def\eel{\end{array} \end{equation} }
\def\bell{\begin{equation} \displaystyle \begin{array}{ll}  }
\def\eell{\end{array} \end{equation} }
\def\bea{\begin{eqnarray}}
\def\eea{\end{eqnarray} }
\def\bean{\begin{eqnarray*}}
\def\eean{\end{eqnarray*} }
\newenvironment{proof}{\noindent{\bf Proof.~}}
{{\mbox{}\hfill {\small \fbox{}}\\}}
\renewcommand\appendix{\bigskip {\noindent \Large \bf Appendix}
  \setcounter{section}{0}%
  \setcounter{subsection}{0}%
\setcounter{equation}{0}%
\setcounter{theorem}{0}%
\def\thetheorem{A.\arabic{theorem}}
\def\theequation {A.\arabic{equation}}}
\newcommand{\dv}{\mathop{\rm div}\nolimits}
\def\NN{\mathbb{N}}
\def\RR{\mathbb{R}}
\def\ZZ{\mathbb{Z}}
\def\ds{\displaystyle}
\def\bs{\bigskip}
\def\pa{\partial}
\def\calM{\mathcal{M}}
\def\calW{\mathcal{W}}
\def\calP{\mathcal{P}}
\def\smes{\mathcal{S}_\mathcal{M}}
\def\Dpetit{{\mbox{\tiny $\Delta$}}}
\def\achapo{\widehat{a}}
\def\bchapo{\widehat{b}}
\DeclareMathOperator{\sgn}{sgn}
\begin{document}

\title{One-dimensional aggregation equation after blow up: existence, uniqueness and numerical simulation}

\author{Fran\c{c}ois James \thanks{Math\'ematiques -- Analyse, Probabilit\'es, Mod\'elisation -- Orl\'eans (MAPMO),
Universit\'e d'Orl\'eans \& CNRS UMR 7349}
\thanks{F\'ed\'eration Denis Poisson, Universit\'e d'Orl\'eans \& CNRS FR 2964,45067 Orl\'eans Cedex 2, France}
and Nicolas Vauchelet  \thanks{Sorbonne Universit\'es, UPMC Univ Paris 06, Laboratoire Jacques-Louis Lions UMR CNRS 7598, Inria, F-75005, Paris, France}
\thanks{The second author is supported by the ANR blanche project Kibord: ANR-13-BS01-0004 funded by the French Ministry of Research}}

\date{}

\maketitle

\begin{abstract}
The nonlocal nonlinear aggregation equation in one space dimension is investigated. 
In the so-called attractive case smooth solutions blow up in finite time, so that weak measure solutions are introduced.
The velocity involved in the equation becomes discontinuous, and a particular care has to be paid to its definition
as well as the formulation of the corresponding flux. When this is done, the notion of duality solutions allows to
obtain global in time existence and uniqueness for measure solutions. An upwind finite volume scheme is also analyzed,
and the convergence towards the unique solution is proved. Numerical examples show the 
 dynamics of the solutions after the blow up time.
\end{abstract}

\bigskip

{\bf 2010 AMS subject classifications:} Primary:  35B40, 35D30, 35L60, 35Q92; Secondary: 49K20.

{\bf Keywords:} Aggregation equation, Weak measure solutions, Transport equation, Blow up, Finite volume scheme.

\section{Introduction}

This paper presents a survey of several results obtained by the authors concerning existence, uniqueness and numerical simulation
of measure solutions for the one-dimensional aggregation equation in the attractive case. 
This equation describes aggregation phenomena in a population of individuals
interacting under a continuous potential $W\,:\,\RR\to\RR$. If $\rho$ denotes the density of individuals, its dynamics is modelled by a 
nonlocal nonlinear conservation equation
	\begin{equation}\label{EqInter}
\pa_t\rho + \pa_x\big(a(\pa_xW*\rho) \rho\big) = 0, \qquad t>0,\quad x\in\RR.
	\end{equation}
This equation is complemented with the initial condition $\rho(0,x)=\rho^{ini}$. Here 
 $a\,:\,\RR\to\RR$ is a smooth 
given function which depends on the actual model under consideration.
This model appears in many applications in physics and population dynamics.
It is used for instance in the framework of granular media \cite{benedetto}, in the description of crowd motion \cite{pieton,maury},
and in the description of the collective motion of cells or bacteria
\cite{okubo,dolschmeis,filblaurpert,NoDEA}, and the references therein.
In many of these examples, the potential $W$ has a singularity at the origin.
Due to this weak regularity, finite time blow up of regular solutions occurs 
and has caught the attention of several authors \cite{Li,BV,Bertozzi1}.
The context of weak measure solution is natural here, because of this blow up property as well as the conservative structure
 of the aggregation equation \eqref{EqInter},
which gives rise to a bound on the mass of measure solutions.

In this paper, we focus on these singular attractive potentials, sometimes called mild singular. More precisely, we introduce the following notion.
	\begin{definition}[pointy potential]\label{pointyPot}
The interaction potential $W$ is said to be an {\it attractive 
pointy potential} if it satisfies the following assumptions:
	\begin{equation}\label{hyp1}\begin{array}{c}
W \mbox{ is Lipschitz continuous, } W(x)=W(-x),\ W(0)=0, \\[2mm]
\mbox{ and  } W \mbox{ is } \lambda\mbox{-concave for some } \lambda\geq 0,
\mbox{ i.e. } W(x)-\frac{\lambda}{2}x^2 \mbox{ is concave}.
	\end{array}\end{equation}
	\end{definition}

Depending upon the applications, the function $a$ may be linear ($a(x)=x$) or nonlinear. 
In what follows, we shall refer to the {\it linear case} when $a={\rm id}$ and the potential $W$
satisfies \eqref{hyp1}. In the {\it nonlinear case} additional assumptions have to be made. First, to 
obtain an attractive model, $a$ has to be nondecreasing. This is related to the so-called one-sided Lipschitz estimates, see Section \ref{sec:nonlin} below
for details. Therefore we consider the following set of assumptions on the velocity field:
	\begin{equation}\label{hyp_a}
a\in C^1(\RR),\quad 0\leq a' \leq \alpha, \quad \alpha>0.
	\end{equation}
Unfortunately, in this case, the class of admissible potentials has to be reduced, namely we are limited to potentials $W$ such that
	\begin{equation}\label{hyp2}\begin{array}{c}
W\in C^2(\RR\setminus\{0\}) \mbox{ satisfies \eqref{hyp1} and there exists $w$ 
continuous, } \|w\|_{L^1(\RR)} = w_0  \\[2mm]
\mbox{such that }W''= -\delta_0 + w \mbox{ holds in the distributional sense}.
\end{array}
	\end{equation}
The above assumptions include classical functions $W$ such as $W(x)=-|x|$ or $W(x)=e^{-|x|}-1$.
As we shall see, each case deserves its own definition of the velocity.

Several authors have studied existence of global in time weak measure solution for the 
aggregation equation. In \cite{Carrillo}, global existence of weak measure solutions in the linear case, that is
for $W$ satisfying \eqref{hyp1}, in $\RR^d$ for any dimension $d\geq 1$ has been obtained
using the gradient flow structure of this problem.
In fact, for the aggregation equation in the case $a={\rm id}$, we can define the interaction energy by
	$$
\calW(\rho)= \frac 12\int_{\RR^d\times\RR^d} W(x-y)\,\rho(dx)\rho(dy).
	$$
Then a gradient flow solution $\mu$ in the Wasserstein space is defined as a solution 
in the sense of distributions of the continuity equation 
	$$
\pa_t \mu + \dv\big(v \mu \big)=0, \qquad v\in \pa^0\calW(\mu),
	$$
where $\pa^0\calW(\mu)$ denotes the element of minimal norm in $\pa\calW(\mu)$,
which is the subdifferential of $\calW$ at the point $\mu$, see \cite{Ambrosio} for more details.
Such a solution is constructed by performing the JKO scheme \cite{JKO}.
However this approach cannot be applied in the nonlinear case that is under assumptions \eqref{hyp_a}-\eqref{hyp2}
and there is, up to our knowledge, no numerical result based on this approach 
allowing to recover the dynamics of the solution after blow up.

An alternative strategy has been proposed by the authors, which consists in interpreting \eqref{EqInter} as a conservative transport
equation with velocity $a(\pa_xW*\rho)$. Since solutions blow up in finite time, eventually $\rho$ become measure-valued, and 
care has to be taken of the product $a(\pa_xW*\rho)\rho$: typically Dirac masses may appear and the velocity becomes discontinuous
precisely at their location. Hence this  requires the use of tools which have been developped for advection equations with discontinuous coefficients:
pushforward by a generalized flow \cite{PoupaudRascle,Bianchini}, or duality solutions \cite{bj1}. This paper will make use of the latter notion, 
which is recalled in the next Section. The first application to the aggregation equation was done in the particular case of chemotaxis in 
 \cite{NoDEA}.  It has been extended later to more general aggregation equations in both the linear and nonlinear cases
in \cite{GF_dual}. The main drawback of this method is that it is presently limited to one space dimension. In the linear multidimensional case
(as in \cite{Carrillo}), the pushforward method has been successfully applied in \cite{CJLV}. We emphasize that in all cases the definition of the 
velocity and of its product with the measure $\rho$ has to be very carefully treated, as it is a key ingredient to prove
the uniqueness of solutions.

Numerical simulations of solutions to \eqref{EqInter} before the blow up time 
has been investigated in \cite{CCH} with a finite volume method but no convergence result has been obtained,
and in \cite{CB} thanks to a particle method.
However, the dynamics of the solutions after the blow up time is not recovered in these works.
Then, in \cite{sinum}, a finite volume scheme of Lax-Friedrichs type has been proposed and analyzed. 
This scheme has been designed in order to recover the dynamics of the solution after blow up time. 
In this paper, we study another finite volume scheme, based on an upwind approach.
As in \cite{sinum}, the convergence of the scheme is proved and numerical simulations showing different 
blow up profiles are proposed.

The outline of the paper is the following. The next Section is devoted to the definition of duality 
solution to equation \eqref{EqInter}. We first recall useful results on duality solutions for
transport equation. Then the definition of duality solution for the problem at hand is
defined in subsection \ref{sec22}. A particular attention is given to the definition of the 
 flux and velocity in both sets of assumptions.
Section \ref{sec:lin} is devoted to the proof of existence and uniqueness of weak measure solutions
in the linear case ($a={\rm id}$ and $W$ satisfying \eqref{hyp1}).
The nonlinear case, that is assumptions \eqref{hyp_a}-\eqref{hyp2}, is studied in Section \ref{sec:nonlin}.
Section \ref{sec:lin} and \ref{sec:nonlin} summarize the main results of the
articles \cite{NoDEA,GF_dual}.
Finally the numerical resolution of the problem is proposed in Section \ref{num}.
The  convergence of an upwind-type finite volume scheme is obtained in Theorem \ref{th:convmacro}.
Numerical illustrations showing different behaviours of solutions after blow up for different choices
 of the interaction potential are provided in subsection \ref{sec:numsim}

\section{Duality solutions}

We will make use of the notations $C_0(\RR)$ for the set of continuous functions that vanish at infinity,
$\calM_b(\RR)$ for the space of finite measures on $\RR$.
For $\rho\in \calM_b(\RR)$, its total mass is denoted $|\rho|(\RR)$.
This space will be always endowed with the weak topology $\sigma(\calM_b,C_0)$
and we denote $\smes :=C([0,T];{\mathcal M}_b(\RR)-\sigma({\mathcal M}_b,C_0))$.
Since we focus on scalar conservation laws, we can assume without loss
of generality that the total mass of the system is scaled to $1$.
Indeed, if the total mass is $M\neq 1$, then we rescale the density by introducing
$\rho/M$, it suffices to change the definition of the function $a$ 
by introducing $\widetilde{a}(x)=a(Mx)$ which will always satisfies \eqref{hyp_a}.
Thus we will work in some space of probability measures, namely 
the Wasserstein space of order $q\geq 1$, which is the space
of probability measures with finite order $q$ moment:
$$
\calP_q(\RR) = \left\{\mu \mbox{ nonnegative Borel measure}, \mu(\RR)=1, \int |x|^q \mu(dx) <\infty\right\}.
$$

\subsection{Duality solutions for linear transport equation}\label{duality}
We consider the conservative transport equation
\begin{equation}\label{eq.conserve}
\partial_t\rho + \pa_x\big(b(t,x)\rho\big) = 0, \qquad (t,x)\in(0,T)\times\RR,
\end{equation}
where $b$ is a given bounded Borel function. Since no regularity is assumed for $b$, 
solutions to \eqref{eq.conserve} eventually are measures in space. A convenient tool to handle this
is the notion of duality solutions, which are defined as weak solutions, the test functions 
being Lipschitz solutions to the backward linear transport equation
	\begin{eqnarray}
& & \ds \partial_t p + b(t,x) \partial_x p = 0, \label{(3)} \qquad
p(T,.) = p^T \in {\rm Lip}(\RR).
	\end{eqnarray}
In fact, a formal computation shows that 
$\frac{d}{dt}\left(\int_{\RR}p(t,x)\rho(t,dx)\right) = 0$,
which defines the duality solutions for suitable $p$.

It is quite classical that a sufficient condition to ensure existence for \eqref{(3)} is that 
the velocity field be compressive, in the following sense:
\begin{definition}
We say that the function $b$ satisfies the one-sided Lipschitz (OSL) condition if
\begin{equation}\label{OSLC}
\partial_x b(t,.)\leq \beta(t)\qquad\mbox{for $\beta\in L^1(0,T)$, in the distributional sense}.
\end{equation}
\end{definition}
However, to have uniqueness, we need to restrict ourselves to {\it reversible} solutions of (\ref{(3)}): let ${\mathcal L}$ 
denote the set of Lipschitz continuous solutions to (\ref{(3)}), and define
the set ${\mathcal E}$ of {\it exceptional solutions} by
	$$
{\mathcal E} = \Big\{p\in{\mathcal L} \mbox{ such that } p^T \equiv 0 \Big\}.
	$$ 
The possible loss of uniqueness corresponds to the case where ${\mathcal E}$ is not reduced to $\{\rho=0\}$.
	\begin{definition}
We say that $p\in{\mathcal L}$ is a {\bf reversible solution} to (\ref{(3)}) 
if $p$ is locally constant on the set 
	$$
{\mathcal V}_e=\Big\{(t,x)\in [0,T] \times \RR;\ \exists\ p_e\in{\mathcal E},\ p_e(t,x)\not=0\Big\}.
	$$
	\end{definition}
We refer to \cite{bj1} for complete statements of the characterization and properties of reversible solutions.
Then, we can state the definition of duality solutions.
	\begin{definition}
We say that 
$\rho\in \smes := C([0,T];{\mathcal M}_b(\RR)-\sigma({\mathcal M}_b,C_0))$
is a {\bf duality solution} to (\ref{eq.conserve}) 
if for any $0<\tau\le T$, and any {\bf reversible} solution $p$ to (\ref{(3)})
with compact support in $x$,
the function $\displaystyle t\mapsto\int_{\RR}p(t,x)\rho(t,dx)$ is constant on
$[0,\tau]$.
	\label{defdual}\end{definition}

We summarize now some useful properties of duality solutions.

	\begin{theorem}(Bouchut, James \cite{bj1})\label{ExistDuality}
\begin{enumerate}
\item Given $\rho^\circ \in {\mathcal M}_b(\RR)$, under the assumptions
(\ref{OSLC}), there exists a unique $\rho \in \smes$,
duality solution to (\ref{eq.conserve}), such that $\rho(0,.)=\rho^\circ$. \\
Moreover, if $\rho^\circ$ is nonnegative, then $\rho(t,\cdot)$ is nonnegative
for a.e. $t\geq 0$. And we have the mass conservation 
	$$
|\rho(t,\cdot)|(\RR) = |\rho^\circ|(\RR), \quad \mbox{ for a.e. } t\in ]0,T[.
	$$
\item Backward flow and push-forward: the duality solution satisfies
	\begin{equation}\label{flow}
\forall\, t\in [0,T], \forall\, \phi\in C_0(\RR),\quad
\int_\RR \phi(x)\rho(t,dx) = \int_\RR \phi(X(t,0,x)) \rho^0(dx),
	\end{equation}
where the {\bf backward flow} $X$ is defined as the unique reversible
solution to
	$$
\pa_tX + b(t,x) \pa_xX = 0 \quad \mbox{ in } ]0,s[\times\RR, \qquad
X(s,s,x)=x.
	$$
\item For any duality solution $\rho$, we define the {\bf generalized flux}
  corresponding to $\rho$ by 
$b\Dpetit \rho = -\pa_t u$, where $u=\int^x \rho\,dx$.

There exists a bounded Borel function $\widehat b$, called {\bf
universal representative} of $b$, such that $\widehat b = b$
almost everywhere, $b\Dpetit \rho =\widehat b\rho$ and for any duality solution $\rho$,
	$$
\partial_t \rho + \partial_x(\widehat b\rho) = 0 \qquad \hbox{in the distributional sense.}
	$$
\item Stability: Let $(b_n)$ be a bounded sequence in
$L^\infty(]0,T[\times\RR)$, such that
$b_n\rightharpoonup b$ in $L^\infty(]0,T[\times\RR)-w\star$. Assume
$\partial_x b_n\le \beta_n(t)$, where $(\beta_n)$ is bounded in $L^1(]0,T[)$,
$\partial_x b\le\beta\in L^1(]0,T[)$.
Consider a sequence $(\rho_n)\in\smes$ of duality solutions to
	$$
\partial_t\rho_n+\partial_x(b_n\rho_n)=0\quad\hbox{in}\quad]0,T[\times\RR,
	$$
such that $\rho_n(0,.)$ is bounded in ${\mathcal M}_{b}(\RR)$, and
$\rho_n(0,.)\rightharpoonup\rho^\circ\in{\mathcal M}_{b}(\RR)$.

\noindent Then $\rho_n\rightharpoonup \rho$ in $\smes$, where $\rho\in\smes$ is the
duality solution to
	$$
\partial_t\rho+\partial_x(b\rho)=0\quad\hbox{in}\quad]0,T[\times\RR,\qquad
\rho(0,.)=\rho^\circ.
	$$
Moreover, $\widehat b_n\rho_n\rightharpoonup \widehat b\rho$ weakly in ${\mathcal M}_{b}(]0,T[\times\RR)$.
\end{enumerate}
	\end{theorem}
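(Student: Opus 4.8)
The plan is to build all four assertions on two ingredients: spatial mollification of $b$, and the theory of reversible solutions to the backward equation \eqref{(3)} developed in \cite{bj1}. First I would regularize $b$ into a smooth field $b_\eps = b*\zeta_\eps$; since convolution with a nonnegative kernel preserves one-sided bounds, the OSL condition \eqref{OSLC} passes to $\partial_x b_\eps \le \beta_\eps(t)$ with $\int_0^T \beta_\eps \le \int_0^T\beta + o(1)$. For smooth $b_\eps$ the characteristics $\dot X = b_\eps(t,X)$ are classical, and \eqref{OSLC} gives, via Gronwall applied to $\frac{d}{ds}(Y-X) = b_\eps(s,Y)-b_\eps(s,X)\le \beta_\eps(s)(Y-X)$ for $X<Y$, the forward estimate $0\le Y(s)-X(s)\le (Y(s_0)-X(s_0))\exp(\int_{s_0}^s\beta_\eps)$. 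Hence the forward flow map is Lipschitz, so \eqref{(3)} admits Lipschitz solutions $p_\eps(t,\cdot)=p^T\circ X_\eps(T;t,\cdot)$, and the same uniform one-sided bound supplies the equicontinuity needed to pass to the limit $\eps\to0$ and recover the reversible backward flow $X$ of $b$; the construction of reversible solutions and the characterization of the exceptional set $\mathcal{E}$ and of $\mathcal{V}_e$ is exactly the content of \cite{bj1}, which I would invoke.

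\textbf{Assertion 1.} For existence I would solve $\partial_t\rho_\eps+\partial_x(b_\eps\rho_\eps)=0$ classically, so $\rho_\eps(t)$ is the push-forward of $\rho^\circ$ by the forward flow of $b_\eps$; this gives nonnegativity and mass conservation uniformly in $\eps$, hence a weak-$*$ limit $\rho\in\smes$. To verify $\rho$ is a duality solution, I pair it against a reversible solution $p$ with compact support: approximating $p$ by the solutions $p_\eps$ of the mollified backward equations, the quantity $\int_\RR p_\eps(t,x)\,\rho_\eps(t,dx)$ is exactly constant in $t$ (smooth duality), and the stability of reversible solutions together with $\rho_\eps\rightharpoonup\rho$ lets me pass to the limit, so $t\mapsto\int_\RR p(t,x)\,\rho(t,dx)$ is constant; nonnegativity and the mass identity survive the limit. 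For uniqueness, let $\mu$ be the difference of two duality solutions with the same datum, a duality solution with $\mu(0)=0$. Given $\tau\in(0,T]$ and $p^\tau\in{\rm Lip}(\RR)$ compactly supported, the reversible solution $p$ on $[0,\tau]$ with terminal value $p^\tau$ pairs to a constant, whence $\int_\RR p^\tau(x)\,\mu(\tau,dx)=0$; since such $p^\tau$ separate the measures of $\mathcal{M}_b(\RR)$, $\mu(\tau)=0$.

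\textbf{Assertions 2--4.} Assertion 2 follows by passing to the limit in the smooth analogue of \eqref{flow} valid for $\rho_\eps$ and the classical flow $X_\eps$, using the local uniform convergence $X_\eps\to X$ to the reversible flow and the fixed datum $\rho^\circ$. For Assertion 3 I would set $u=\int^x\rho\,dx$, define $b\Dpetit\rho=-\partial_t u$, and identify the limit of $b_\eps\rho_\eps$: one shows this flux is absolutely continuous with respect to $\rho$ with a density $\widehat b$ depending only on $b$ and the selected flow, not on $\rho$; this is the universal representative, equal to $b$ a.e.\ and carrying the correct trace at the concentration set. Assertion 4 is the stability statement itself: equi-OSL bounds give equi-forward-Lipschitz flows $X_n$ with compactness, their limit is the reversible flow of $b$, and the push-forward formula of Assertion 2 turns $\rho_n(0,\cdot)\rightharpoonup\rho^\circ$ into $\rho_n\rightharpoonup\rho$; the flux convergence $\widehat b_n\rho_n\rightharpoonup\widehat b\rho$ then follows from $\widehat b_n\rho_n=b_n\Dpetit\rho_n=-\partial_t u_n$ together with $u_n\to u$.

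The genuinely hard analytic core, used repeatedly above, is the stability of reversible solutions: since $b_\eps$ (resp.\ $b_n$) converges only weakly-$*$, controlling the backward flow through the merging regions, and above all making the universal representative $\widehat b$ well defined, requires a delicate analysis of the set where characteristics collapse and mass concentrates. This is precisely the part established in \cite{bj1}; everything else in the proof is a limiting argument organized around it.
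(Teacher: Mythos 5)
The first thing to note is that the paper does not prove this theorem at all: it is recalled verbatim from Bouchut--James \cite{bj1} (the attribution is in the theorem header), and the surrounding text simply refers to \cite{bj1} for the theory of reversible solutions. So the only meaningful comparison is with the proof in \cite{bj1} itself, whose architecture your sketch reproduces correctly: the OSL bound gives, via Gronwall, a forward-Lipschitz flow for smoothed coefficients; backward Lipschitz solutions are obtained by composing the terminal datum with that flow; the duality pairing yields existence, and uniqueness follows cleanly because compactly supported Lipschitz terminal data separate measures; the flux statements are organized through $u=\int^x\rho\,dx$. Your uniqueness argument and the derivation of $\widehat b_n\rho_n\rightharpoonup\widehat b\rho$ from $\widehat b_n\rho_n=-\pa_t u_n$ together with uniform total-variation bounds are sound as stated.

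The genuine weakness is that at every point where the theorem is actually hard, your proposal cites \cite{bj1} --- that is, the very result being proved. Two such points. First, you need that limits of the smooth backward solutions $p_\eps$ (and, in Assertion 4, of reversible solutions for merely weak-$*$ convergent $b_n$) are again \emph{reversible}: Arzel\`a--Ascoli compactness only produces \emph{a} Lipschitz solution of the limit equation, and Lipschitz solutions are non-unique precisely in the compressive regions where mass concentrates; identifying the limit as the reversible one is the main stability theorem of \cite{bj1}, not a consequence of equicontinuity. Second, and more seriously, Assertion 3 is dispatched in one sentence (``one shows this flux is absolutely continuous with respect to $\rho$ with a density $\widehat b$ depending only on $b$\dots''): this is a restatement of the claim, not an argument. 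Producing a single Borel function $\widehat b$, independent of $\rho$, that represents the flux for \emph{every} duality solution simultaneously requires the fine structure of the reversible flow (the set where characteristics collapse, the exceptional solutions ${\mathcal E}$ and the set ${\mathcal V}_e$, the values of $\widehat b$ on the jump set), and nothing in your sketch produces it. Since the paper itself only cites \cite{bj1}, deferring these steps is defensible as scholarship; but then the proposal is a reading guide to \cite{bj1} rather than a proof, and as a self-contained argument it is incomplete exactly where the difficulty of the theorem is concentrated.
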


The set of duality solutions is clearly a vector space, but it has
to be noted that a duality solution is not {\it a priori} defined as 
a solution in the sense of distributions. 
However, assuming that the coefficient $b$ is piecewise continuous,
we have the following equivalence result:
	\begin{theorem}(Bouchut, James \cite{bj1})\label{dual2distrib}
Let us assume that in addition to the OSL condition \eqref{OSLC}, 
$b$ is piecewise continuous on $]0,T[\times\RR$ where the 
set of discontinuity is locally finite.
Then there exists a function $\bchapo$ which coincides with $b$
on the set of continuity of $b$.

With this $\bchapo$, $\rho\in \smes$ is a duality solution to 
\eqref{eq.conserve} if and only if $\pa_t\rho+\pa_x(\bchapo\rho)=0$
in ${\mathcal D}'(\RR)$. Then the generalized flux 
$b\Dpetit \rho = \bchapo \rho$. 
In particular, $\bchapo$ is a universal representative of $b$.
	\end{theorem}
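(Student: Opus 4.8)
The plan is to reduce everything to the abstract existence theory of Theorem~\ref{ExistDuality} and then pin down the representative $\bchapo$ by a local analysis at the discontinuity curves of $b$. The starting observation is the meaning of the OSL condition \eqref{OSLC} for a piecewise continuous field: across a discontinuity curve $x=\gamma(t)$, the distributional derivative $\pa_x b$ carries the singular term $\big(b(t,\gamma(t)^+)-b(t,\gamma(t)^-)\big)\,\delta_{x=\gamma(t)}$, and since $\pa_x b\le\beta(t)\in L^1$ has no singular part, the jump must be nonpositive, i.e. $b(t,\gamma(t)^+)\le b(t,\gamma(t)^-)$. Hence every discontinuity is \emph{compressive}: characteristics funnel into $\gamma$ from both sides, which is exactly the configuration in which mass of a duality solution may concentrate. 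I would then define $\bchapo$ by setting $\bchapo=b$ on the (open) set of continuity, and, on each discontinuity curve, $\bchapo(t,\gamma(t)):=\dot\gamma(t)$, the propagation speed of the curve, which lies in $[b(t,\gamma(t)^+),b(t,\gamma(t)^-)]$ and exists for a.e.\ $t$ since the curves are Lipschitz graphs (their speed being controlled by the $L^\infty$ bound on $b$). As the discontinuity set is Lebesgue-negligible, $\bchapo=b$ a.e., so $\bchapo$ is a legitimate candidate representative coinciding with $b$ on the continuity set.

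For the implication ``duality $\Rightarrow$ distributional'', I would invoke Theorem~\ref{ExistDuality}(3), which furnishes a universal representative $\bchapo$-candidate $\hat b$ with $\pa_t\rho+\pa_x(\hat b\rho)=0$ and $b\Dpetit\rho=\hat b\rho$ for every duality solution $\rho$. It then suffices to check that my explicit $\bchapo$ agrees with $\hat b$ on $\mathrm{supp}\,\rho$. On the continuity set this is immediate, since $b$ is continuous there and equals $\hat b$ almost everywhere, and any atom of $\rho$ located there is transported at the classical characteristic speed $b$. On a discontinuity curve carrying a Dirac $m(t)\delta_{\gamma(t)}$, matching the $\delta'_{x=\gamma}$ contributions in the identity $\pa_t\rho+\pa_x(\,\cdot\,\rho)=0$ forces the coefficient on the curve to equal $\dot\gamma(t)$, so both $\hat b$ and $\bchapo$ take the value $\dot\gamma$ there. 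Consequently $\pa_t\rho+\pa_x(\bchapo\rho)=0$, the generalized flux is $b\Dpetit\rho=\bchapo\rho$, and $\bchapo$ is a universal representative.

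For the converse, I would take any reversible solution $p$ of \eqref{(3)} with compact support in $x$ and differentiate $t\mapsto\int_\RR p(t,x)\rho(t,dx)$. Using $\pa_t\rho=-\pa_x(\bchapo\rho)$ together with $\pa_t p=-b\,\pa_x p$ and an integration by parts in $x$, one formally obtains
\[
\frac{d}{dt}\int_\RR p\,\rho=\int_\RR(\bchapo-b)\,\pa_x p\,\rho .
\]
On the continuity set $\bchapo-b=0$, so the integrand is supported on the discontinuity curves. There the decisive point is that a reversible $p$ is locally constant on the set $\mathcal{V}_e$ where exceptional solutions are nonzero; because the jumps are compressive, exceptional solutions (vanishing at $t=T$ but nonzero earlier) are generated precisely in backward neighborhoods of the curves, so $\mathcal{V}_e$ contains a neighborhood of each curve and $\pa_x p=0$ there. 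Hence the integrand vanishes $\rho$-a.e., $\int p\,\rho$ is constant, and $\rho$ is a duality solution in the sense of Definition~\ref{defdual}.

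The hard part is the rigorous treatment at the discontinuity curves, where two points require the fine theory of \cite{bj1}. First, one must justify that the product $\bchapo\rho$ is well defined and equals the generalized flux when $\rho$ charges a curve along which $b$ genuinely jumps; this is exactly where the choice $\bchapo=\dot\gamma$ and the matching of $\delta'$ terms are essential. Second, one must establish that reversible solutions are locally constant in a full neighborhood of each compressive curve, which rests on the characterization of the exceptional set $\mathcal{E}$ and of $\mathcal{V}_e$. Both facts hinge on the inflow geometry forced by \eqref{OSLC}, so the core of the argument is to convert the inequality $\pa_x b\le\beta$ into precise statements about the local funnelling of characteristics into the discontinuity set.
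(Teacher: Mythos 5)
Your forward direction (duality $\Rightarrow$ distributional), resting on Theorem~\ref{ExistDuality}(3) and on identifying the universal representative with your candidate on the support of $\rho$, is sound in outline, and your observation that \eqref{OSLC} forces $b(t,\gamma(t)^+)\le b(t,\gamma(t)^-)$ is correct. But there is a genuine error at the heart of your construction: you conflate ``the jump of $b$ is nonpositive'' with ``characteristics funnel into $\gamma$ from both sides.'' The OSL condition constrains only the jump of $b$; it says nothing about the speed of the discontinuity curve, which is a feature of the coefficient, not a trajectory of the flow (in particular these curves need not be Lipschitz graphs with speed bounded by $\|b\|_{\infty}$, as you assert). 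Funnelling occurs only where $\dot\gamma(t)\in[b^+,b^-]$; elsewhere characteristics simply cross the curve. Consequently your definition $\bchapo(t,\gamma(t)):=\dot\gamma(t)$ is wrong on crossing curves, and with it the equivalence fails. Concrete counterexample: $b(t,x)=1$ for $x<0$, $b(t,x)=1/2$ for $x>0$. Then \eqref{OSLC} holds, $\gamma\equiv 0$, $\dot\gamma=0\notin[1/2,1]$, and your recipe gives $\bchapo(t,0)=0$. The stationary measure $\rho(t)\equiv\delta_0$ then satisfies $\pa_t\rho+\pa_x(\bchapo\rho)=0$ in $\mathcal{D}'$ (the flux $\bchapo\rho$ vanishes identically), yet it is not a duality solution: the unique duality solution with datum $\delta_0$ is $\delta_{t/2}$, the mass crossing the line at speed $1/2$. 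So ``distributional with your $\bchapo$ $\Rightarrow$ duality'' is false. The same example destroys the key step of your converse argument: here $\mathcal{E}=\{0\}$, hence $\mathcal{V}_e=\emptyset$, so it is simply not true that $\mathcal{V}_e$ contains a neighborhood of each discontinuity curve and that reversible solutions satisfy $\pa_x p=0$ there; that property is available only near genuinely characteristic (funnelling) portions. A correct representative must take its values in $[b^+,b^-]$ on the jump set --- for instance the projection of $\dot\gamma$ onto $[b^+,b^-]$ --- which agrees with $\dot\gamma$ exactly on the portions where mass can concentrate (there the matching of $\delta'$ terms forces this value, and those portions are automatically Lipschitz because they coincide with Filippov trajectories), and differs from $\dot\gamma$ on crossing portions, thereby excluding the parasite Dirac above.

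For comparison: the paper does not reprove this statement at all; it invokes \cite[Theorem 4.3.7]{bj1} and records only the mechanism, namely that the Cauchy problem is well posed for both formulations (duality solutions on one hand, distributional solutions with the canonical representative on the other), so the two solution sets coincide by uniqueness. Your plan --- explicit representative plus local analysis at the curves --- is closer to the internals of \cite{bj1} itself, but as it stands the treatment of non-characteristic discontinuities must be repaired before it proves the statement; the remaining technical debts you acknowledge (the Leibniz computation for $t\mapsto\int p\,\rho$ with $p$ merely Lipschitz, and the identification of the universal representative on non-atomic singular parts of $\rho$) are indeed the content of the cited theory.
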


This result comes from the uniqueness of solutions to the Cauchy 
problem for both kinds of solutions, see \cite[Theorem 4.3.7]{bj1}.

\subsection{Aggregation equation as a transport equation}
\label{sec22}

Equipped with this notion of solutions, we can now define duality solutions for the aggregation equation.
The idea was introduced in \cite{BJpg} in the context of pressureless gases. It was next applied to chemotaxis in \cite{NoDEA} and generalized in \cite{GF_dual}.
	\begin{definition}\label{defexist}
We say that $\rho\in \smes$
is a duality solution to \eqref{EqInter} if there exists 
$\widehat{a}_\rho\in L^\infty((0,T)\times\RR)$ and $\beta\in L^1_{loc}(0,T)$ 
satisfying $\pa_x\widehat{a}_\rho\le \beta$ in ${\mathcal D}'((0,T)\times\RR)$, 
such that for all $0<t_1<t_2<T$,
	\begin{equation}\label{rhodis}
\pa_t\rho + \pa_x(\widehat{a}_\rho\rho) = 0
	\end{equation}
in the sense of duality on $(t_1,t_2)$,
and $\widehat{a}_\rho=a(W'*\rho)$ a.e.
We emphasize that it means that the final datum for \eqref{(3)} should be at $t_2$ instead of $T$.
	\end{definition}
This allows at first to give a meaning to the notion of distributional solutions, but it turns out that uniqueness is 
a crucial issue. For that, a key point is a specific definition of the product $\widehat{a}_\rho\rho$,
which can be seen as the flux of the system.
Indeed, when concentrations occur in conservation equations, measure-valued solutions can
have Dirac deltas, which makes the velocity a BV function. A key point for the definition of the flux is to be able to
handle products of BV functions with measure-valued functions.
In the framework of the aggregation equation, a definition of the flux can be obtained 
using the dependancy of the velocity on the solution.
Let us make this point precise in both situations considered in this paper.

{\bf In the linear case} that is $a={\rm id}$ and $W$ satisfying assumptions \eqref{hyp1}, the flux is defined by
	\begin{equation}\label{achapo}
J = \widehat{a}_\rho\rho, \qquad 
\widehat{a}_\rho(t,x):= \pa^0W*\rho(t,x) = \int_{x\neq y} W'(x-y)\rho(t,dy).
	\end{equation}
This definition is motivated by the following stability result:
	\begin{lemma}\cite[Lemma 3.1]{CJLV}\label{lemstab_a}
Let us assume that $W$ satisfies \eqref{hyp1}.
Let $(W_n)_{n\in\NN^*}$ be a sequence in $C^1(\RR)$ satisfying \eqref{hyp1}
with the same constant $\lambda$ not depending on $n$ and such that
	$$
sup_{x\in \RR \setminus (-\frac 1n,\frac 1n)} \big|W'_n(x)- W'(x)\big| \leq \frac 1n, \qquad \mbox{ for all } n\in \NN^*.
	$$
If the sequence $\rho_n\rightharpoonup \rho$ weakly as measures, then
for every continuous compactly supported $\phi$, we have
	$$
\lim_{n\to +\infty} \iint_{\RR\times \RR} \phi(x) W'_n(x-y) \rho_n(dx)\rho_n(dy)
=\iint_{\RR\times \RR \setminus D} \phi(x) W'(x-y) \rho(dx)\rho(dy),
	$$
where $D$ is the diagonal of $\RR\times\RR$: $D=\{(x,x),\, x\in \RR\}$.
	\end{lemma}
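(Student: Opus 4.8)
The plan is to exploit the symmetry of the product measure $\rho_n\otimes\rho_n$ together with the fact that $W$ and each $W_n$ are even, so that $W'$ and $W_n'$ are odd. First I would record two elementary facts. Since $W$ is Lipschitz, $W'$ is bounded by $L:=\mathrm{Lip}(W)$; for $|z|\ge 1/n$ the approximation hypothesis gives $|W_n'(z)|\le L+1$, while for $|z|<1/n$ the $\lambda$-concavity (which makes $W_n'-\lambda\,\mathrm{id}$ nonincreasing) together with the bound just obtained at $\pm 1/n$ yields a uniform estimate, so that $C:=\sup_n\|W_n'\|_{L^\infty}<\infty$. Moreover $W'$ is continuous on $\RR\setminus\{0\}$: a jump of $W'$ at some $z_0\neq 0$ would, through $|W_n'-W'|\le 1/n$ on $\{|z|\ge 1/n\}$ and the continuity of each $W_n'\in C^1$, force $W_n'(z_0-\eps)-W_n'(z_0+\eps)$ to remain bounded away from $0$ as $\eps\to0$, contradicting the continuity of $W_n'$.

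Next I would symmetrize. Relabelling $x\leftrightarrow y$ and using that $W_n'$ is odd, the left-hand integrand $\phi(x)W_n'(x-y)$ can be replaced by its symmetric part, giving
\[
\iint_{\RR\times\RR}\phi(x)W_n'(x-y)\,\rho_n(dx)\rho_n(dy)=\frac12\iint_{\RR\times\RR}\big[\phi(x)-\phi(y)\big]W_n'(x-y)\,\rho_n(dx)\rho_n(dy),
\]
and the same manipulation applied to $W'$ (where the diagonal may now be included, since the integrand vanishes there) rewrites the target right-hand side as $\tfrac12\iint[\phi(x)-\phi(y)]W'(x-y)\,\rho(dx)\rho(dy)$. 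Set $\Psi_n(x,y)=[\phi(x)-\phi(y)]W_n'(x-y)$ and $\Psi(x,y)=[\phi(x)-\phi(y)]W'(x-y)$. The crucial gain is that the factor $\phi(x)-\phi(y)$ vanishes on the diagonal: combined with the continuity of $W'$ off the origin and its boundedness at the origin, this makes $\Psi$ bounded and \emph{continuous on all of} $\RR\times\RR$, the only possible singularity of $W'(x-y)$, at $x=y$, being neutralized.

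It then remains to prove $\iint\Psi_n\,d(\rho_n\otimes\rho_n)\to\iint\Psi\,d(\rho\otimes\rho)$, which I would split as $A_n+B_n$ with $A_n=\iint(\Psi_n-\Psi)\,d(\rho_n\otimes\rho_n)$ and $B_n=\iint\Psi\,d(\rho_n\otimes\rho_n)-\iint\Psi\,d(\rho\otimes\rho)$. For $A_n$, on $\{|x-y|\ge 1/n\}$ one bounds $|\Psi_n-\Psi|\le|\phi(x)-\phi(y)|\,|W_n'-W'|\le 2\|\phi\|_\infty/n$, while on $\{|x-y|<1/n\}$ the uniform continuity of $\phi$ gives $|\phi(x)-\phi(y)|\le\omega_\phi(1/n)$ and hence $|\Psi_n-\Psi|\le 2C\,\omega_\phi(1/n)$; since $\rho_n\otimes\rho_n$ is a probability measure, $A_n\to0$. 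For $B_n$, I would first upgrade the weak-$\star$ convergence $\rho_n\rightharpoonup\rho$ in $\sigma(\calM_b,C_0)$ to narrow convergence: as all $\rho_n$ and $\rho$ are probability measures (we work in $\calP_q(\RR)$), no mass escapes to infinity, so the sequence is tight; tightness then propagates to the products, giving $\rho_n\otimes\rho_n\to\rho\otimes\rho$ narrowly. Testing this narrow convergence against the bounded continuous function $\Psi$ yields $B_n\to0$, which concludes.

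The main obstacle is conceptual rather than computational: it is the concentration on the diagonal, that is, the discrepancy between integrating $W'(x-y)$ over all of $\RR\times\RR$ and over the complement of the diagonal, which is precisely the ambiguity in assigning a value to $W'(0)$ when Dirac masses collide. The symmetrization is the device that resolves it, turning the jump of $W'$ at the origin into a removable singularity through the vanishing factor $\phi(x)-\phi(y)$. The remaining technical points, namely the uniform bound on $W_n'$ near $0$ and the passage from weak-$\star$ to narrow convergence of the tensorized measures, are then routine.
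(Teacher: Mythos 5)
Your proof is correct and follows essentially the same route as the proof of this lemma in \cite{CJLV} (the present paper only quotes the statement): symmetrization using the oddness of $W'_n$ and $W'$, a uniform bound on $\|W'_n\|_{L^\infty}$ near the origin obtained from $\lambda$-concavity, the splitting into near-diagonal and far-from-diagonal regions, and passage to the limit against the bounded continuous symmetrized integrand. Your explicit remark that one must upgrade weak-$\star$ convergence to narrow convergence, which is legitimate precisely because all measures involved are probability measures so no mass escapes to infinity, is the same point the cited proof relies on by working in the space of probability measures throughout.
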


{\bf In the nonlinear case} given by assumptions \eqref{hyp_a} and \eqref{hyp2}, we use 
the assumption on $W$ to obtain a definition of the flux. Indeed we can formally
take the convolution of \eqref{hyp2} by $\rho$, then multiply by $a(W'*\rho)$. Denoting by 
$A$ the antiderivative of $a$ such that $A(0)=0$  and using the chain rule we obtain formally
	\begin{equation}\label{ChainRule}
-\pa_x(A(W'*\rho))=-a(W'*\rho)W''*\rho = a(W'*\rho)(\rho-w*\rho).
	\end{equation}
Thus a natural formulation for the flux $J$ is given by
	\begin{equation}\label{DefFluxJ}
J:= -\pa_x\big(A(W'*\rho)\big)+a(W'*\rho)w*\rho.
	\end{equation}
The product $a(W'*\rho)w*\rho$ is well defined since $w*\rho$ is Lipschitz.
The function $A(W'*\rho)$ is a $BV(\RR)$ function.
Then $J$ is defined in the sense of measures.
The analogue of the stability result of Lemma \ref{lemstab_a} is verified 
since if $\rho_n \rightharpoonup \rho$, we have that $W'*\rho_n \rightharpoonup W*\rho$
a.e., which induces that in the sense of distributions $J_n$ converges to $J$.
Moreover, from the chain rule for BV functions (or Vol'pert calculus), there exists 
a function $\achapo_\rho$ such that $\achapo_\rho=a(W'*\rho)$ a.e., and $J=\achapo_\rho \rho$.
Then it can be verified (see Section 3.3 in \cite{GF_dual}) that in the case $a={\rm id}$,
$\achapo_\rho$ is given by \eqref{achapo}.

This idea of definition of the flux comes from \cite{NoDEA}, where the particular case 
$W(x)=\frac 12 e^{-|x|}-\frac 12$ appearing in chemotaxis has been treated.
An analogous situation arising in plasma physics is considered in \cite{CRAS}. 
In a similar context, other definitions of the product can
be found, see \cite{NPS} in the one-dimensional setting, 
and \cite{PoupaudDef} for a generalization in two space dimensions, 
where defect measures are used. In this latter work, more singular potentials are considered, 
but the uniqueness of the weak measure solution is not recovered.

\section{Existence and uniqueness in the linear case}\label{sec:lin}

In this section we state and prove the existence and uniqueness theorem for duality solutions to the aggregation equation \eqref{EqInter} in the
linear case, that is $a={\rm id}$ and a general pointy potential $W$ satisfying \eqref{hyp1}. 
	\begin{theorem}\cite[Theorem 3.7]{GF_dual}\label{th:duality}
Let $W$ as in \eqref{hyp1} and $a=\mbox{\rm id}$.
Assume that $\rho^{ini}\in \calP_1(\RR)$.
Then for any $T>0$, there exists a unique $\rho\in\smes$ such that $\rho(0)=\rho^{ini}$, 
$\rho(t)\in \calP_1(\RR)$ for any $t\in (0,T)$, and $\rho$ is a duality solution
to equation \eqref{EqInter} with universal representative $\widehat{a}_\rho$ in \eqref{rhodis} defined
in \eqref{achapo}.
Moreover we have $\rho = X_\# \rho^{ini}$ where $X$ is the 
backward flow corresponding to $\widehat{a}_\rho$.
	\end{theorem}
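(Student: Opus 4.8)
The plan is to split the argument into an existence part, obtained by regularisation together with the stability of duality solutions, and a uniqueness part, obtained by a contraction estimate in Wasserstein distance written in Lagrangian coordinates.

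For existence, I would first record the structural consequence of $\lambda$-concavity: whenever $\rho(t)\in\calP_1(\RR)$, the velocity $b=\widehat a_\rho=W'*\rho$ is bounded by $\mathrm{Lip}(W)$ and satisfies the one-sided Lipschitz bound $\pa_x b=W''*\rho\le\lambda$ in $\calD'$, since $W''\le\lambda$ as a distribution and $\rho$ is a probability measure. This is exactly the hypothesis \eqref{OSLC} required by Theorem \ref{ExistDuality}. I would then approximate $W$ by a sequence $(W_n)\subset C^2(\RR)$ satisfying \eqref{hyp1} with the same constant $\lambda$ and with $W_n'\to W'$ uniformly away from the origin, as in Lemma \ref{lemstab_a}. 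For each smooth $W_n$ the velocity $b_n=W_n'*\rho_n$ is Lipschitz in space, so the regularised version of \eqref{EqInter} is classically well posed (for instance by a Banach fixed point in $\calP_1(\RR)$ equipped with the $W_1$ metric and its characteristic flow), producing $\rho_n\in\smes$. The bounds above being uniform in $n$, the stability statement of Theorem \ref{ExistDuality}(4) yields a limit $\rho\in\smes$ which is a duality solution for the limiting velocity, and the push-forward representation $\rho=X_\#\rho^{ini}$ follows from Theorem \ref{ExistDuality}(2). The delicate point here is the identification of the limiting flux: naively $W'*\rho$ is ambiguous at the atoms of $\rho$, and it is precisely Lemma \ref{lemstab_a} that guarantees the limit of $\widehat b_n\rho_n$ is the off-diagonal expression \eqref{achapo}, so that the self-interaction on the diagonal drops out. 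Propagation of $\calP_1(\RR)$ and mass conservation then follow from $|b|\le\mathrm{Lip}(W)$ and Theorem \ref{ExistDuality}(1).

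For uniqueness, I would exploit the one space dimension and pass to the monotone rearrangement. Writing $X_i(t,\cdot):[0,1]\to\RR$ for the generalised inverse of the cumulative distribution function of $\rho_i(t)$, one has $\rho_i=X_i(t,\cdot)_\#\,\mathcal L_{[0,1]}$ and $d_{W_1}(\rho_1,\rho_2)=\|X_1-X_2\|_{L^1(0,1)}$. The crucial observation is that the off-diagonal flux \eqref{achapo} is exactly what renders the Lagrangian velocity well defined: since $W$ is even, $W'$ is odd, and setting $\overline{W'}(0)=0$ gives $\pa_t X_i(t,z)=\int_0^1\overline{W'}(X_i(t,z)-X_i(t,z'))\,dz'$. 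Writing $\Delta=X_1-X_2$, differentiating $\|\Delta\|_{L^1}$ and symmetrising in $(z,z')$ using the oddness of $\overline{W'}$, I would invoke the one-sided Lipschitz inequality $(\overline{W'}(s)-\overline{W'}(r))(s-r)\le\lambda(s-r)^2$, which is equivalent to the $\lambda$-concavity of $W$. The term that is not manifestly controlled is killed by conservation of the barycenter $\int_0^1\Delta\,dz=\int x\,\rho_1(dx)-\int x\,\rho_2(dx)$, which is constant in time and vanishes because $\iint W'(x-y)\rho(dx)\rho(dy)=0$ by oddness and the data coincide at $t=0$. This leaves $\frac{d}{dt}\|\Delta\|_{L^1}\le\lambda\|\Delta\|_{L^1}$, and Gronwall forces $\Delta\equiv0$, hence $\rho_1=\rho_2$.

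I expect the main obstacle to be uniqueness rather than existence, and within it the correct bookkeeping of the diagonal: one must verify that the Lagrangian velocity built from $\overline{W'}$ genuinely represents the flux $\widehat a_\rho\rho$ of Definition \ref{defexist}, including at times where Dirac masses are present, so that the antisymmetrisation above is legitimate. The consistency between the Eulerian duality formulation (through Theorem \ref{dual2distrib}) and the Lagrangian push-forward formulation is the technical heart of the proof; once it is secured, the $\lambda$-concavity supplies exactly the monotonicity needed to close the Gronwall estimate.
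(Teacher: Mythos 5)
Your proposal is sound, but the two halves compare differently with the paper. The uniqueness half is essentially the paper's own argument (subsection 3.3): the paper also passes to the generalized inverse of the cumulative distribution function, writes the Lagrangian equation $\pa_t F^{-1}(t,z)=\widehat{a}_\rho(t,F^{-1}(t,z))$ with the off-diagonal velocity \eqref{achapo}, and closes a Gronwall estimate from the one-sided estimates \eqref{WOSL}--\eqref{WOSL1}, arriving at Proposition \ref{propuniq}. One remark here: your barycenter step is superfluous. Writing $\Delta=X_1-X_2$, once you symmetrize in $(z,z')$ and use the oddness of your $\overline{W'}$ (i.e.\ $W'$ extended by $0$ at the origin), the integrand survives only where $\sgn\Delta(z)\neq\sgn\Delta(z')$, and on that set the quantity $\bigl(X_1(z)-X_1(z')\bigr)-\bigl(X_2(z)-X_2(z')\bigr)=\Delta(z)-\Delta(z')$ has exactly the sign for which the one-sided Lipschitz inequality applies, giving directly $\frac{d}{dt}\|\Delta\|_{L^1}\le 2\lambda\|\Delta\|_{L^1}$ with no leftover term. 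Invoking conservation of the center of mass is not wrong, but it needlessly restricts you to coinciding initial data: it yields uniqueness, while the self-contained symmetrization yields the full contraction $d_{W1}(\rho_1(t),\rho_2(t))\le e^{2\lambda t}\,d_{W1}(\rho_1^{ini},\rho_2^{ini})$ of Proposition \ref{propuniq}, valid for distinct data.

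The existence half is where you genuinely diverge. The paper never regularizes $W$: it approximates $\rho^{ini}$ by finite convex combinations of Dirac masses, solves the sticky-particle ODE system \eqref{EDOxi}, observes that the resulting velocity \eqref{achaporhon} is piecewise continuous so that Theorem \ref{dual2distrib} upgrades these distributional solutions to duality solutions, and then passes to the limit using the stability statement of Theorem \ref{ExistDuality} together with Lemma \ref{lemstab_a} --- the same two pillars you invoke. Your route (smooth $W_n$, fixed point for the regularized problem, then stability) is legitimate and avoids discussing collisions, since for $W_n\in C^2$ the velocity is Lipschitz and classical well-posedness applies; its price is an extra step you should make explicit: because $b_n=W_n'*\rho_n$ depends on the unknown, you must first extract a convergent subsequence of $(\rho_n)$ in $\smes$ (tightness from the uniform velocity bound and the first-moment estimate, equicontinuity in time) \emph{before} Lemma \ref{lemstab_a} can identify the weak-$\star$ limit of the flux as $\widehat{a}_\rho\rho$ and Theorem \ref{ExistDuality} can be applied. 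What the paper's atomic approximation buys instead is an explicit description of the dynamics after blow up (aggregates moving and merging by sticky collisions), which is precisely what the upwind scheme of Section \ref{num} is designed to capture.
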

The proof of this result is splitted into several steps corresponding 
to the following subsections.

\begin{remark}
This result has been extended to any dimension $d\geq 1$ in \cite{CJLV} and it has been proved that
such solutions are equivalent to gradient flow solutions obtained in \cite{Carrillo}.
\end{remark}

\subsection{One-sided Lipschitz estimate}

\begin{lemma}\label{achapoOSL}
Let $\rho(t)\in \calM_b(\RR)$ be nonnegative for all $t\geq 0$.
Then under assumptions \eqref{hyp1} the function
$(t,x)\mapsto \widehat{a}_\rho(t,x)$ defined in \eqref{achapo}
satisfies the one-sided Lipschitz estimate
	$$
\widehat{a}_\rho(t,x)-\widehat{a}_\rho(t,y) \leq \lambda (x-y)|\rho|(\RR),
\quad\mbox{ for all }\ x>y,\ t\geq 0
	$$
\end{lemma}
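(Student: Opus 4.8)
The plan is to reduce this one-sided estimate to a monotonicity property of the (modified) derivative of $W$, which is exactly what $\lambda$-concavity supplies. First I would record that, since $W(x)-\frac{\lambda}{2}x^2$ is concave and even, the function $W'(x)-\lambda x$ is non-increasing wherever it is defined, and $W'$ is odd. For a pointy potential the only point where $W'$ can fail to exist is the origin, and there the symmetric (minimal-norm) value is $\pa^0W(0)=0$; with this convention the exclusion $\int_{x\neq y}$ in \eqref{achapo} means precisely that $\widehat{a}_\rho(t,x)=\int_\RR \pa^0W(x-z)\,\rho(t,dz)$, since the diagonal contributes $\pa^0W(0)\,\rho(t,\{x\})=0$. (I rename the integration variable to $z$ to avoid clashing with the evaluation points $x>y$ of the statement.)

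Next I would show that the function $h(u):=\pa^0W(u)-\lambda u$ is non-increasing on all of $\RR$, including across $u=0$. Away from the origin this is just the concavity of $W-\frac{\lambda}{2}x^2$, i.e. $h=(W-\frac{\lambda}{2}x^2)'$ is non-increasing where classical. At the origin, the concavity gives $h(0^+)\le h(0^-)$ while oddness of $W'$ gives $h(0^+)=-h(0^-)$, so that $h(0^-)\ge 0=h(0)\ge h(0^+)$; hence $h$ stays monotone through the jump as well. This is the key structural step.

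The estimate then follows by comparison under the integral sign. For $x>y$ and any $z$ one has $x-z>y-z$, so $h(x-z)\le h(y-z)$, that is $\pa^0W(x-z)-\pa^0W(y-z)\le \lambda(x-z)-\lambda(y-z)=\lambda(x-y)$. Integrating this pointwise inequality against the nonnegative measure $\rho(t,\cdot)$ yields
$$
\widehat{a}_\rho(t,x)-\widehat{a}_\rho(t,y)=\int_\RR\big[\pa^0W(x-z)-\pa^0W(y-z)\big]\rho(t,dz)\le \lambda(x-y)\,|\rho|(\RR),
$$
which is the claim; nonnegativity of $\rho$ is used crucially to preserve the inequality direction, together with $\int_\RR\rho(t,dz)=|\rho|(\RR)$.

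The main obstacle to watch is the interplay between the excluded diagonal $x=z$ in \eqref{achapo} and the possible atoms of $\rho$ at the two evaluation points $x$ and $y$: a careless integration of $W'(x-z)-W'(y-z)$ over the two different exclusion sets would mishandle the jump of $W'$ at $0$. Folding the exclusion into the convention $\pa^0W(0)=0$ and verifying that $h$ is monotone \emph{through} the origin is exactly what forces these atomic contributions to obey the same bound. Alternatively, one can split off the atoms at $x$ and $y$ explicitly and use the oddness of $W'$ to check directly that each contributes at most $\lambda(x-y)$ times its mass; both routes give the stated inequality uniformly in $t\ge 0$.
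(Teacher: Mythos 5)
Your proof is correct, and it rests on the same two structural facts as the paper's proof: $\lambda$-concavity, which makes $W'(u)-\lambda u$ non-increasing away from the origin, and oddness of $W'$, which forces $W'(0^+)\le 0\le W'(0^-)$. What differs is the decomposition. The paper splits $\widehat{a}_\rho(x)-\widehat{a}_\rho(y)$ into the integral over $z\notin\{x,y\}$, bounded using the one-sided estimate \eqref{WOSL}, plus the atomic contribution at $z\in\{x\}\cup\{y\}$, which oddness regroups into the single term $W'(x-y)\int_{z\in\{x\}\cup\{y\}}\rho(dz)$, bounded by the separate endpoint inequality \eqref{WOSL1}. You avoid this case analysis altogether: by folding the excluded diagonal of \eqref{achapo} into the convention $\pa^0W(0)=0$ and proving that $h(u)=\pa^0W(u)-\lambda u$ is non-increasing on all of $\RR$ (your chain $h(0^-)\ge 0=h(0)\ge h(0^+)$ is exactly the content of \eqref{WOSL1}), you get one pointwise inequality valid for every $z$, atoms included, which you integrate directly against the nonnegative measure. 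Your packaging is cleaner and makes transparent why the ``zero on the diagonal'' convention is exactly the right one; the paper's route, in exchange, isolates \eqref{WOSL} and \eqref{WOSL1} as standalone inequalities, and \eqref{WOSL} is reused later in the contraction argument of subsection \ref{uniq}. One caveat you share with the paper: both arguments treat $W'$ as defined everywhere on $\RR\setminus\{0\}$, whereas a Lipschitz $\lambda$-concave $W$ could a priori have countably many further kinks, so your claim that the origin is the only possible singularity does not follow from \eqref{hyp1} alone; this is harmless for the estimate (work with one-sided derivatives or the minimal selection at such points), but it is an implicit assumption rather than a consequence.
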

\begin{proof}
Using assumption \eqref{hyp1}, $x\mapsto W'(x)-\lambda x$ is a nonincreasing function on 
$\RR\setminus\{0\}$. Therefore $\lim_{x\to 0^{\pm}} W'(x) = W'(0^{\pm})$ exists and from 
the oddness of $W'$, we deduce that $W'(0^-)=-W'(0^+)$.
Moreover, for all $x>y$ in $\RR\setminus\{0\}$ we have 
$W'(x)-\lambda x\leq W'(y)-\lambda y$.
Thus we have the one-sided Lipschitz estimate (OSL) for $W'$
\begin{equation}\label{WOSL}
\forall\, x>y\in \RR\setminus\{0\}, \qquad  W'(x)-W'(y)\leq \lambda (x-y).
\end{equation}
Letting $y\to 0^{\pm}$ we deduce that for all $x>0$, 
$W'(x)-\lambda x \leq W'(0^+)$ and $W'(x)-\lambda x \leq W'(0^-)$. 
Thus we also have the one-sided estimate
	\begin{equation}  \label{WOSL1}
W'(x) \leq \lambda x, \quad \mbox{ for all } x>0.  
	\end{equation}

By definition of $\widehat{a}_\rho$ \eqref{achapo}, we have
	$$
\widehat{a}_\rho(x)-\widehat{a}_\rho(y) = \int_{z\neq x, z\neq y} 
(W'(x-z)-W'(y-z))\rho(dz) + W'(x-y) \int_{z\in\{x\}\cup\{y\}} \rho(dz),
	$$
where we use the oddness of $W'$ \eqref{hyp1} in the last term.
Let us assume that $x>y$, from \eqref{WOSL}, we deduce that
$W'(x-z)-W'(y-z)\leq \lambda (x-y)$ and with \eqref{WOSL1}, we deduce
$W'(x-y)\leq \lambda (x-y)$. Thus, using the nonnegativity of $\rho$, 
we deduce the one-sided Lipschitz (OSL) estimate for $\widehat{a}_\rho$.
\end{proof}

\subsection{Dynamics of aggregates}\label{dyna}

We first assume that the initial density is given by a finite sum of Dirac deltas:
$\rho^{ini}_n=\sum_{i=1}^n m_i \delta_{x_i^0}$ 
where $x_1^0<x_2^0<\dots<x_n^0$ and the $m_i$-s are nonnegative.
Moreover, we assume that $\sum_{i=1}^n m_i=1$ and that
the first moment $\sum_{i=1}^n m_i |x_i^0|$ is uniformly bounded with respect to $n$,
so that $\rho^{ini}_n\in \calP_1(\RR)$.
We look for a solution in the form $\rho_n(t,x)=\sum_{i=1}^n m_i \delta_{x_i(t)}$.
Injecting this expression into the definition of the macroscopic velocity in \eqref{achapo}, we get
\begin{equation}\label{achaporhon}
\widehat{a}_{\rho_n}(x) = \left\{\begin{array}{ll}
\ds \sum_{j\neq i} m_jW'(x_i-x_j) & \ds \qquad \mbox{ if } x=x_i,\ i=1,\ldots,n  \\[2mm]
\ds \sum_{j=1}^n m_j W'(x-x_j) & \ds \qquad \mbox{ otherwise}.
\end{array}\right.
\end{equation}
We emphasize that this macroscopic velocity is defined everywhere, which allows 
to give a sense to its product with the measure $\rho_n$.
Then, $\rho_n$ is a solution in the sense of distributions of \eqref{rhodis}
provided the sequence $(x_i)_{i=1,\ldots,n}$ satisfies the ODE system
\begin{equation}\label{EDOxi}
x'_i(t) = \sum_{j\neq i} m_j W'(x_i-x_j),\qquad x_i(0)=x_i^0,\qquad i=1,\ldots,n_\ell,
\end{equation}
where $n_\ell\leq n$ is the number of distinct particles, i.e.
$n_\ell = \#\{i\in \{1,\ldots,N\},  x_i\neq x_j, \forall\,j \}$.
Then we define the dynamics of aggregates by:
\begin{itemize}
\item When the $x_i$ are all distinct, they are solutions of system 
\eqref{EDOxi} (with zero right hand side if $n_\ell =1$).
\item When two particles collide, they stick to form a bigger particle whose mass is the sum of both particles and the dynamics continues with one particle less.
\end{itemize}
Clearly this choice of the dynamics implies mass conservation. It also preserves the one-sided Lipschitz estimate for the velocity.
Finally, setting $\rho_n(t,x) = \sum_{i=1}^{n_\ell} m_i\delta_{x_i(t)}(x)$, the sticky particle dynamics defines a distributional solution to \eqref{rhodis}. 
Hence, we are in position to apply
 Theorem \ref{dual2distrib}, and deduce that $\rho_n(t,x)$ is a duality
solution for given initial data $\rho_n^{ini}$.

For a general initial datum $\rho^{ini}$ in $\calP_1(\RR)$, we approximate it 
by a sequence of measures $\rho^{ini}_n$, for which we can construct a duality solution as above.
Then we use the stability of duality solutions (see Theorem \ref{ExistDuality})
to pass to the limit in the approximation.
This allows to prove the existence result in Theorem \ref{th:duality}.

\subsection{Contraction property}\label{uniq}

Uniqueness in Theorem \ref{th:duality} is obtained thanks to a contraction 
argument in the Wasserstein distance. In the present one dimensional framework, the definition of the
Wasserstein distance can be simplified using the generalized inverse.
More precisely, let $\rho$ be a nonnegative measure, we denote by $F$ its cumulative distribution 
function. Then we can define the generalized inverse of $F$ (or monotone rearrangement of $\rho$)
by $F^{-1}(z):=\inf\{x\in \RR / F(x)>z\}$, it is a right-continuous
and nondecreasing function as well, defined on $[0,1]$.
We have for every nonnegative Borel map $\xi$,
	$$
\int_\RR \xi(x) \rho(dx) = \int_0^1 \xi(F^{-1}(z))\,dz.
	$$
In particular, $\rho\in \calP_1(\RR)$ if and only if $F^{-1}\in L^1(0,1)$.
Then, if $\rho_1$ and $\rho_2$ belong to $\calP_1(\RR)$, with monotone rearrangement
$F_1$ and $F_2$, respectively, we have the explicit expression of the Wasserstein distance (see \cite{Villani2})
\begin{equation}\label{dWF-1}
d_{W1}(\rho_1,\rho_2) = \int_0^1 |F_1(z)-F_2(z)|\,dz.
\end{equation}

Let $\rho$ be a duality solution that satisfies \eqref{rhodis} in the distributional sense.
Denoting $F$ its cumulative distribution function and $F^{-1}$ its generalized inverse,
we have by integration of \eqref{rhodis}
	$$
\pa_t F + \widehat{a}_\rho \pa_x F = 0,
	$$
so that the generalized inverse is a solution to
\begin{equation}
  \label{eq:F-1}
\pa_t F^{-1}(t,z) = \widehat{a}_\rho(t,F^{-1}(z)).  
\end{equation}
Moreover thanks to a change of variables in \eqref{achapo}, 
	$$
\widehat{a}_\rho(t,F^{-1}(z))=\int_{y\neq z} W'\big(F^{-1}(z)-F^{-1}(y)\big)\,dy.
	$$
Now using \eqref{eq:F-1} and since $W'$ is one-sided Lipschitz continuous \eqref{WOSL}, 
we obtain the following contraction property, which implies uniqueness.
\begin{proposition}\label{propuniq}
Assume $\rho_1(t,\cdot),\rho_2(t,\cdot)\in\calP_1(\RR)$ satisfy \eqref{rhodis} in the 
sense of distributions, with $\widehat{a}_{\rho_i}$ given by \eqref{achapo}, and initial data
$\rho_1^{ini}$ and $\rho_2^{ini}$. Then we have, for all $t>0$
	$$
d_{W1}(\rho_1(t,\cdot),\rho_2(t,\cdot)) \leq e^{2\lambda t} d_{W1}(\rho_1^{ini},\rho_2^{ini}).
	$$
\end{proposition}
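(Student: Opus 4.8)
The plan is to differentiate the one-dimensional Wasserstein distance, expressed through generalized inverses, and to close a Grönwall inequality whose rate is governed by the one-sided Lipschitz constant $\lambda$ of $W'$. Denote by $F_1^{-1}(t,\cdot)$ and $F_2^{-1}(t,\cdot)$ the generalized inverses (monotone rearrangements) of $\rho_1(t,\cdot)$ and $\rho_2(t,\cdot)$, so that by \eqref{dWF-1} one has $d_{W1}(\rho_1(t),\rho_2(t))=\int_0^1|F_1^{-1}(t,z)-F_2^{-1}(t,z)|\,dz$. Each $F_i^{-1}$ solves \eqref{eq:F-1}, which after the change of variables used in \eqref{achapo} reads $\partial_t F_i^{-1}(t,z)=\int_{y\neq z}W'\big(F_i^{-1}(t,z)-F_i^{-1}(t,y)\big)\,dy$. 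Writing $d(t,z):=F_1^{-1}(t,z)-F_2^{-1}(t,z)$, I would differentiate $t\mapsto\int_0^1|d(t,z)|\,dz$ and subtract the two equations to obtain
\[
\frac{d}{dt}\int_0^1|d|\,dz=\int_0^1\!\!\int_0^1\sgn(d(z))\Big[W'\big(F_1^{-1}(z)-F_1^{-1}(y)\big)-W'\big(F_2^{-1}(z)-F_2^{-1}(y)\big)\Big]\,dy\,dz.
\]

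Next comes the crucial symmetrization. Since $W'$ is odd by \eqref{hyp1}, exchanging the names of the variables $y$ and $z$ in the double integral and averaging the two expressions replaces $\sgn(d(z))$ by $\tfrac12\big(\sgn(d(z))-\sgn(d(y))\big)$, giving
\[
\frac{d}{dt}\int_0^1|d|\,dz=\frac12\int_0^1\!\!\int_0^1\big(\sgn(d(z))-\sgn(d(y))\big)\Big[W'\big(F_1^{-1}(z)-F_1^{-1}(y)\big)-W'\big(F_2^{-1}(z)-F_2^{-1}(y)\big)\Big]\,dy\,dz.
\]
The point of this manipulation is that the two factors now have a controllable joint sign. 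Because each $F_i^{-1}$ is nondecreasing in $z$, for $z>y$ the arguments $F_i^{-1}(z)-F_i^{-1}(y)$ are nonnegative, and the signs of $d$ at $z$ and $y$ dictate their relative order: when $\sgn(d(z))-\sgn(d(y))=2$ one has $F_1^{-1}(z)-F_1^{-1}(y)\ge F_2^{-1}(z)-F_2^{-1}(y)$, and the reverse when it equals $-2$. In either case the one-sided Lipschitz bounds \eqref{WOSL}-\eqref{WOSL1} for $W'$ yield the pointwise estimate
\[
\big(\sgn(d(z))-\sgn(d(y))\big)\Big[W'(\cdots)-W'(\cdots)\Big]\le \lambda\,\big(\sgn(d(z))-\sgn(d(y))\big)\big(d(z)-d(y)\big),
\]
the same holding for $z<y$ by the symmetry of the integrand (the set $z=y$ being negligible).

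It then remains to integrate. Expanding and using $\big(\sgn(d(z))-\sgn(d(y))\big)\big(d(z)-d(y)\big)\le 2(|d(z)|+|d(y)|)$ together with Fubini gives $\frac{d}{dt}\int_0^1|d|\,dz\le 2\lambda\int_0^1|d|\,dz$, and Grönwall's lemma closes the argument with the factor $e^{2\lambda t}$.

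The step I expect to require the most care is not this algebra but the two places where nonsmoothness enters. First, the differentiation of $\int_0^1|d(t,z)|\,dz$ must be justified, which I would do by regularizing $\sgn$ with a smooth odd approximation (or by exploiting Lipschitz-in-$t$ regularity of $F_i^{-1}$ and almost-everywhere differentiability), the set $\{d=0\}$ contributing nothing in the limit. Second, and more delicate, is the behaviour of $W'$ at the origin: when $\rho_i$ carries an atom the corresponding $F_i^{-1}$ is flat, so $F_i^{-1}(z)-F_i^{-1}(y)$ vanishes on a set of positive measure and $W'(0)$ is ambiguous. This is exactly where the careful definition \eqref{achapo} of the velocity through $\partial^0W$ pays off: the diagonal self-interaction is excluded and, by oddness, contributes zero, so that \eqref{WOSL1} ($W'(x)\le\lambda x$ for $x>0$) handles precisely the boundary cases in which one of the two arguments of $W'$ equals zero.
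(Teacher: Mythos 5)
Your proposal is correct and follows essentially the same route as the paper: representing the solutions by their monotone rearrangements $F_i^{-1}$, using the evolution equation \eqref{eq:F-1} together with the change of variables $\widehat{a}_{\rho}(t,F^{-1}(z))=\int_{y\neq z}W'\big(F^{-1}(z)-F^{-1}(y)\big)\,dy$, and closing a Gr\"onwall estimate from the one-sided Lipschitz bounds \eqref{WOSL}--\eqref{WOSL1}. The paper (a survey) only sketches this step, and your symmetrization in $(y,z)$ via the oddness of $W'$, the sign analysis using monotonicity of $F_i^{-1}$, and the handling of atoms through the diagonal-excluding definition \eqref{achapo} are precisely the details needed to complete it, yielding the stated rate $2\lambda$.
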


\section{Existence and uniqueness in the nonlinear case}\label{sec:nonlin}

In this Section, we focus on the nonlinear case characterized  by assumptions \eqref{hyp_a} and \eqref{hyp2}.
The main result is the following theorem, which includes the existence result of \cite{NoDEA}, where 
the particular case $W(x)=\frac 12 e^{-|x|}-\frac 12$ arising in chemotaxis has been considered, and
the existence result presented in \cite{CRAS}, when $W(x)=-|x|/2$, which 
appears in many applications in physics or biology.

	\begin{theorem}\cite[Theorem 3.10]{GF_dual}\label{dual_anonid}
Let be given $\rho^{ini}\in\calP_1(\RR)$.
Let us assume that $a$ and $W$ are such as in \eqref{hyp2}.
For all $T> 0$ there exists
a unique duality solution $\rho$ of \eqref{EqInter} in the sense 
of Definition \ref{defexist} that satisfies in the distributional sense
	\begin{equation}\label{eqrhodis}
\pa_t \rho + \pa_x J = 0,
	\end{equation}
where $J$ is defined by \eqref{DefFluxJ}. Moreover, $\rho(t)\in \calP_1(\RR)$
for $t\in (0,T)$.
	\end{theorem}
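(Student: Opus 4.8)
The plan is to follow the three-step architecture of the linear case (Section~\ref{sec:lin}), adapting each step to the nonlinear velocity $\achapo_\rho = a(W'*\rho)$: first a one-sided Lipschitz (OSL) estimate that unlocks the Bouchut--James machinery of Theorem~\ref{ExistDuality}, then existence by regularization and stability, and finally uniqueness through a contraction estimate in the Wasserstein distance $d_{W1}$.

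First I would establish the OSL bound for $\achapo_\rho$. Formally differentiating and using \eqref{hyp2}, $\pa_x \achapo_\rho = a'(W'*\rho)\,(W''*\rho) = a'(W'*\rho)(w*\rho - \rho)$. Since $a' \geq 0$ and $\rho \geq 0$, the term $-a'(W'*\rho)\rho$ is a nonpositive measure, while $a'(W'*\rho)\,w*\rho$ is bounded by $\alpha w_0$ because $0\le a'\le\alpha$ and $\|w*\rho\|_\infty \leq \|w\|_{L^1}|\rho|(\RR)=w_0$. Hence $\pa_x \achapo_\rho \le \alpha w_0 =: \beta$ with $\beta$ constant, so the OSL condition \eqref{OSLC} holds uniformly. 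This is exactly the hypothesis needed to freeze the coefficient and invoke the existence, uniqueness and stability statements of Theorem~\ref{ExistDuality} for the associated linear transport problem, and, since $\achapo_\rho$ agrees with the piecewise continuous field $a(W'*\rho)$ away from the (locally finite) concentration set, to move between the duality and distributional formulations through Theorem~\ref{dual2distrib}.

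For existence, I would regularize the potential, replacing $W$ by a sequence $W_n\in C^2(\RR)$ obtained by smoothing the singularity at the origin while keeping \eqref{hyp1} with the same $\lambda$ and $W_n''=-\delta_0^{(n)}+w$ with $\delta_0^{(n)}\to\delta_0$; for each $n$ the field $a(W_n'*\rho)$ is regular enough to solve \eqref{EqInter} classically (e.g.\ by characteristics or a particle scheme as in Section~\ref{dyna}), producing a duality solution $\rho_n$ whose OSL constant $\alpha w_0$ is uniform in $n$. The uniform OSL and mass bounds let me apply the stability part of Theorem~\ref{ExistDuality} to extract $\rho_n\rightharpoonup\rho$ in $\smes$, a duality solution for the limiting velocity; simultaneously the flux stability (the analogue of Lemma~\ref{lemstab_a} discussed after \eqref{DefFluxJ}, using $W_n'*\rho_n\to W'*\rho$ a.e.) gives $J_n\rightharpoonup J$ with $J$ as in \eqref{DefFluxJ}, so the limit satisfies \eqref{eqrhodis} with universal representative $\achapo_\rho=a(W'*\rho)$, i.e.\ it solves the problem in the sense of Definition~\ref{defexist}.

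The hard part will be uniqueness, and this is where the nonlinearity genuinely departs from the linear case. As in Section~\ref{uniq} I would pass to the monotone rearrangement $p_i:=F_i^{-1}$, for which \eqref{eq:F-1} reads $\pa_t p_i(z)=a\big(\int_0^1 W'(p_i(z)-p_i(y))\,dy\big)$, and differentiate $d_{W1}(\rho_1,\rho_2)=\int_0^1|p_1-p_2|\,dz$. In the linear case ($a=\mathrm{id}$) the resulting double integral is handled by antisymmetrization: swapping $z\leftrightarrow y$ and using the oddness and OSL of $W'$ recasts the integrand as $\tfrac12[\sgn(p_1-p_2)(z)-\sgn(p_1-p_2)(y)]\,[W'(\cdots)-W'(\cdots)]$, which \eqref{WOSL} bounds by $2\lambda\,d_{W1}$, yielding Proposition~\ref{propuniq}. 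With $a$ present this cancellation is obstructed, since $a(U_i(z))$ is no longer odd under the exchange. The key technical point is therefore to recover the estimate despite the composition: I would write $a(U_1(z))-a(U_2(z))=b(z)\,(U_1(z)-U_2(z))$ with $0\le b(z)\le\alpha$ and exploit the monotonicity of $a$ together with the splitting $W''=-\delta_0+w$, so that the ``$-\delta_0$'' part contributes with the favorable sign after multiplication by $\sgn(p_1-p_2)$ inherited from the OSL of $W'$, while the smooth ``$w$'' part is genuinely Lipschitz and directly controlled by $C\,d_{W1}$. Closing this would give $\frac{d}{dt}d_{W1}(\rho_1,\rho_2)\le C\,d_{W1}(\rho_1,\rho_2)$, and Gronwall's lemma forces $\rho_1=\rho_2$ for identical data. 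I expect the careful justification of this sign-and-splitting argument, rather than the OSL bound or the stability passage, to be the delicate core of the proof.
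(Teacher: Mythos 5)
Your first two steps are essentially sound and parallel the paper. The OSL estimate is the paper's Lemma~\ref{aOSL} (one small fix: for a measure $\rho$ the bound is $\|w*\rho\|_\infty\le\|w\|_{L^\infty}|\rho|(\RR)$, so the OSL constant is $\alpha\|w\|_{L^\infty}|\rho|(\RR)$, not $\alpha\|w\|_{L^1}|\rho|(\RR)$). For existence the paper approximates the \emph{initial datum} by atoms and uses the sticky-particle dynamics \eqref{dynagg} together with the stability item of Theorem~\ref{ExistDuality}, whereas you regularize the \emph{potential}; both routes are workable, since the OSL bound is uniform under your regularization and the flux identification passes to the limit as you indicate.

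The genuine gap is in uniqueness, and it sits exactly where the paper warns: the Wasserstein contraction of Section~\ref{uniq} ``cannot be used here, since it strongly relies on the linearity of $a$''. Two concrete failures. First, your Lagrangian ODE $\pa_t p_i(z)=a\big(\int_0^1W'(p_i(z)-p_i(y))\,dy\big)$ is not the correct dynamics at atoms: by \eqref{dynagg}, an atom of mass $m$ at $x$ moves with the Vol'pert average $-[A(u)]_{x}/m=\frac1m\int_{u(x^+)}^{u(x^-)}a(s)\,ds$, $u=W'*\rho$, which equals $a$ evaluated at the diagonal-removed convolution only when $a$ is linear. Second, and more fundamentally, the ``favorable sign'' of the $-\delta_0$ part is not a pointwise fact. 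In the linear proof it emerges only after symmetrizing in $(z,y)$: the two members of a crossing pair cancel because they carry the \emph{same} coefficient. Writing $a(U_1)-a(U_2)=b(z)\,(U_1-U_2)$ reintroduces a $z$-dependent weight, so the paired sum becomes proportional to $b(z)-b(y)$, which has neither a sign nor any smallness. Pointwise, the $\delta_0$-contribution reads $\sgn(p_1-p_2)(z)\,b(z)\,\big(\ell_2(z)-\ell_1(z)\big)$, where $\ell_i(z)$ is the left endpoint of the plateau of $p_i$ containing $z$; note $\ell_1=\ell_2=\mathrm{id}$ when both measures are atomless, so your estimate does close \emph{before} blow-up, but on atomic configurations (the whole point of the theorem) this term can be positive and of order one while $d_{W1}(\rho_1,\rho_2)$ is arbitrarily small --- try $\rho_1=\delta_{x_1}$ against $\rho_2=\frac12\delta_{y_1}+\frac12\delta_{y_2}$ with $y_1<y_2<x_1$ all close: the true derivative of $d_{W1}$ vanishes by a cancellation \emph{across} the two atoms of $\rho_2$ (averages of $a$ over different jump intervals of $u_2$), a cancellation that a pointwise sign bound cannot see. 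Capturing such cancellations for a nonlinear monotone $a$ is precisely what Kru\v{z}kov's method does, and that is the paper's route: Proposition~\ref{entropy} shows $u=W'*\rho$ is a weak solution of the scalar conservation law \eqref{eq:U}; the entropy condition \eqref{entropycond} is automatic from $\rho\ge0$; doubling of variables then gives the $L^1$ estimates \eqref{uniqU}--\eqref{uniqw}, and Gronwall closes in the variables $|u_1-u_2|+|w*\rho_1-w*\rho_2|$ rather than in $d_{W1}$. Without this (or an equivalent entropy device), your contraction estimate does not close.
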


The proof follows the same steps as the preceding one, except that uniqueness here relies on an entropy estimate. In this respect,
we  emphasize now some links with entropy solutions of scalar conservation laws.
\begin{remark}
If $w=0$ in \eqref{hyp2}, then $\rho$ is the unique duality solution of Theorem \ref{dual_anonid} if and only if
$u=W'*\rho$ is an entropy solution to $\pa_t u + \pa_x A(u) = 0$. (see \cite[Theorem 5.7]{GF_dual})
\end{remark}

\subsection{One sided-Lipschitz estimate}

\begin{lemma}\label{aOSL}
Assume $0\leq\rho\in {\mathcal M}_b(\RR)$ and that \eqref{hyp2} is satisfied.
Then the function $x\mapsto a(W'*\rho)$ satisfies the OSL condition \eqref{OSLC}.
\end{lemma}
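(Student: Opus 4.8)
The plan is to differentiate $a(W'*\rho)$ explicitly, using assumption \eqref{hyp2} to turn $W''$ into $-\delta_0+w$, and then to read off the one-sided bound from the sign of each resulting term. Set $u:=W'*\rho$. Convolving $W''=-\delta_0+w$ with $\rho$ gives
\[
\pa_x u = W''*\rho = -\rho + w*\rho,
\]
a finite measure, so that $u\in BV_{loc}(\RR)$ with diffuse part governed by the continuous function $w*\rho$ and singular (atomic) part $-\rho$. Since $a\in C^1(\RR)$ with $0\le a'\le\alpha$ by \eqref{hyp_a}, the composition $a(u)$ is again $BV_{loc}$, and the chain rule for $BV$ functions (Vol'pert calculus) yields $\pa_x a(u)=\tilde a\,\pa_x u$, where $\tilde a(x)$ equals $a'(u(x))$ on the set of continuity of $u$ and is an averaged slope of $a$ across the jumps of $u$; in every case $0\le\tilde a\le\alpha$.

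Substituting the expression for $\pa_x u$ gives $\pa_x a(u) = -\tilde a\,\rho + \tilde a\,(w*\rho)$. The first term is a nonpositive measure, since $\tilde a\ge 0$ and $\rho\ge 0$. For the second term I would exploit the $\lambda$-concavity in \eqref{hyp1}: because $W-\frac{\lambda}{2}x^2$ is concave its distributional second derivative is nonpositive, i.e.\ $-\delta_0+w\le\lambda$ in the sense of distributions; restricting to $\RR\setminus\{0\}$ and using the continuity of $w$ gives the pointwise bound $w\le\lambda$ on all of $\RR$. Hence $w*\rho\le\lambda\,|\rho|(\RR)$ pointwise, and therefore, using $0\le\tilde a\le\alpha$ together with $\lambda\ge 0$,
\[
\pa_x a(u)\ \le\ \tilde a\,(w*\rho)\ \le\ \alpha\lambda\,|\rho|(\RR).
\]
This is precisely the OSL condition \eqref{OSLC} with the constant $\beta=\alpha\lambda\,|\rho|(\RR)$, which lies in $L^1(0,T)$ because $|\rho|(\RR)$ is finite and conserved in time. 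Specializing to $a=\mathrm{id}$ (so $\alpha=1$) recovers the constant $\lambda\,|\rho|(\RR)$ of Lemma \ref{achapoOSL}, a useful consistency check.

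The main obstacle is to make the chain rule rigorous at the atoms of $\rho$, where $u$ need not be continuous. The clean way is to split $\pa_x u$ into its diffuse and jump parts: on the diffuse part one has genuinely $\tilde a=a'(u)\ge 0$, so the contribution reduces to $-a'(u)\rho^{\mathrm{diff}}\le 0$ plus the controlled term $a'(u)(w*\rho)$; on the jump part, each atom of $\rho$ forces a \emph{downward} jump of $u$ (the singular part of $\pa_x u$ is $-\rho\le 0$), and since $a$ is nondecreasing the induced jump of $a(u)$ is also downward, so its contribution to $\pa_x a(u)$ is a nonpositive atom. Thus every singular contribution is $\le 0$ and the only positive part of $\pa_x a(u)$ is the diffuse term bounded above by $\alpha\lambda\,|\rho|(\RR)$, which closes the argument. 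The remaining points are routine: that $w*\rho$ is well defined and bounded (indeed $w\le\lambda$ bounds it from above, while $w\in L^1$ with $\rho$ finite controls it), and that the estimate is understood in the distributional sense required by \eqref{OSLC}.
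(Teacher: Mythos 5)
Your proof is correct and takes essentially the same route as the paper's: convolve $W''=-\delta_0+w$ with $\rho$ to get $\pa_x(W'*\rho)=-\rho+w*\rho$, apply the chain rule so that $\pa_x\big(a(W'*\rho)\big)=a'(W'*\rho)(-\rho+w*\rho)$, discard the term $-a'(W'*\rho)\rho\le 0$ using $a'\ge 0$ and $\rho\ge 0$, and bound what remains by a constant. Your two refinements are sound and slightly sharpen the paper's one-line argument: you justify the chain rule at the atoms of $\rho$ by Vol'pert calculus (where the paper applies it formally), and you bound $w*\rho\le\lambda\,|\rho|(\RR)$ via the $\lambda$-concavity of $W$, whereas the paper uses $\alpha\|w\|_{L^\infty}\|\rho\|_{L^1}$, so your constant $\alpha\lambda|\rho|(\RR)$ has the minor advantage of not implicitly assuming $w$ bounded.
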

\begin{proof}
Using the assumption on $W$ in \eqref{hyp2}, we deduce that
$$
\pa_{xx}W*\rho = -\rho + w*\rho.
$$
Therefore,
$$
\pa_x(a(\pa_xW*\rho)) = a'(\pa_xW*\rho)(-\rho+w*\rho)
\leq a'(\pa_xW*\rho) w*\rho,
$$
where we use the nonnegativity of $\rho$ in the last inequality.
Then from assumption \eqref{hyp_a} on the function $a$ we get
	$$
\pa_x(a(\pa_xW*\rho)) \leq \alpha \|\rho\|_{L^1} \|w\|_{L^\infty}.
	$$
\end{proof}

\subsection{Dynamics of aggregates}

Following the idea in subsection \ref{dyna}, we first approximate 
the initial data $\rho^{ini}$ by a finite sum of Dirac deltas:
$\rho^{ini}_n=\sum_{i=1}^n m_i \delta_{x_i^0}$ where $x_1^0<x_2^0<\dots<x_n^0$
and the $m_i$ are nonnegative.
We assume that $\sum_{i=1}^n m_i=1$ and 
$\sum_{i=1}^n m_i |x_i^0|$ is uniformly bounded with respect to $n$,
i.e. $\rho^{ini}_n\in \calP_1(\RR)$.
We look for a sequence $(\rho_n)_n$ solving in the distributional sense
$\pa_t\rho_n +\pa_x J_n=0$ where the flux $J_n$ is given by \eqref{DefFluxJ}.
Let $\rho_n(t,x)=\sum_{i=1}^n m_i \delta_{x_i(t)}$.
From assumption \eqref{hyp2} on $W$, we deduce that
$$
W'(x) = - H(x) + \widetilde{w}(x), \quad \mbox{ where }
\widetilde{w}(x) = \int^x_0 w(y)\,dy +\frac 12.
$$
Then, we have
$$
W'*\rho_n(x_i^+) = -\sum_{j=1}^i m_j + 
\sum_{j=1}^n m_j \widetilde{w}(x_i-x_j);
\quad W'*\rho_n(x_i^-) = m_i + W'*\rho_n(x_i^+),
$$
where we use the standard notation $f(x_i^+)=\lim_{x\underset{>}{\to} x_i} f(x)$
and $f(x_i^-)=\lim_{x\underset{<}{\to} x_i} f(x)$.
From these identities, we deduce that in the distributional sense
$$
\pa_x \big(A(W'*\rho_n)\big) = a(W'*\rho_n)w*\rho_n + 
\sum_{i=1}^n [A(W'*\rho_n)]_{x_i} \delta_{x_i},
$$
where $[f]_{x_i}=f(x_i^+)-f(x_i^-)$ is the jump of the function $f$ at $x_i$.
Then we find that $\rho_n$ satisfies \eqref{eqrhodis} in the sense of distributions
if we have
\begin{equation}\label{dynagg}
m_i x'_i(t) = -[A(W'*\rho_n)]_{x_i(t)}, \quad \mbox{ for } i=1,\dots, n_\ell.
\end{equation}
This system of ODEs is complemented by the initial data $x_i(0)=x_i^0$.

Then the dynamics of aggregates is given as in subsection \ref{dyna} by 
\eqref{dynagg} as long as particles are all distinct, and by a sticky dynamics
at collisions.
By construction, $\rho_n(t,x) = \sum_{i=1}^{n_\ell} m_i\delta_{x_i(t)}(x)$ is a duality
solution as in Theorem \ref{dual_anonid} for given initial data $\rho_n^{ini}$.

For a general initial datum $\rho^{ini}$ in $\calP_1(\RR)$, we approximate it 
by a sequence of measures $\rho^{ini}_n$, for which we can construct a duality solution as above.
Then we use the stability of duality solutions (see Theorem \ref{ExistDuality})
to pass to the limit in the approximation.

\subsection{Uniqueness}

The strategy used in subsection \ref{uniq} cannot 
be used here, since it strongly relies on the linearity of $a$.
Then we use an analogy with entropy solutions for scalar conservation laws.
Indeed, the quantity $W'*\rho$ solves a scalar conservation law with source term,
for which we can prove the following entropy estimate:
	\begin{proposition}\cite[Lemma 4.5]{NoDEA}\label{entropy}
Let us assume that assumptions \eqref{hyp2} hold.
For $T>0$, let $\rho\in C([0,T],\calP_1(\RR))$ satisfy \eqref{DefFluxJ}-\eqref{eqrhodis} in the sense of distributions.
Then $u:=W'*\rho$ is a weak solution of 
\begin{equation}\label{eq:U}
\pa_t u+\pa_x A(u) = a(u)w*\rho+\pa_x(w*A(u))-w*(a(u)w*\rho).
\end{equation}
Moreover, if we assume that the entropy condition
\begin{equation}\label{entropycond}
\pa_x u \leq w*\rho
\end{equation}
holds, then for any twice continuously differentiable convex function $\eta$,
we have
\begin{equation}\label{eq:etaU}
\pa_t \eta(u)+\pa_x q(u) -\eta'(u) a(u)w*\rho+\eta'(u)\big(\pa_x(w*A(u))-w*(a(u)w*\rho))
\leq 0,
\end{equation}
where $q$, the entropy flux, is given by $\ds q(x)=\int_0^x \eta'(y)a(y)\,dy$.
	\end{proposition}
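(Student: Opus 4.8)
The plan is to treat the evolution equation \eqref{eq:U} and the entropy inequality \eqref{eq:etaU} separately: the first is an algebraic identity, the second rests on the convexity of the flux $A$ and the one-sided structure of $u$.

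\textbf{The equation for $u$.} First I would derive \eqref{eq:U} by a direct computation. Setting $u=W'*\rho$ and convolving \eqref{eqrhodis} with $W'$ gives $\pa_t u = -W'*\pa_x J = -(W''*J)$, where I have moved the derivative onto $W'$ inside the convolution. The assumption \eqref{hyp2} reads $W''=-\delta_0+w$, so that $W''*J=-J+w*J$ and hence $\pa_t u = J-w*J$. It then remains to insert the definition \eqref{DefFluxJ}, namely $J=-\pa_x(A(u))+a(u)\,w*\rho$, to use $w*\pa_x(A(u))=\pa_x(w*A(u))$, and to collect terms; moving $\pa_x A(u)$ to the left-hand side yields exactly \eqref{eq:U}. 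This step is pure manipulation of convolutions, made rigorous by noting that all the products involved ($a(u)\,w*\rho$, $w*A(u)$, and so on) are well defined because $A(u)\in BV(\RR)$ and $w*\rho$ is continuous.

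\textbf{Structure of $u$ and of the source.} For the entropy estimate I would first record the structural facts. Since $W$ is Lipschitz, $u=W'*\rho\in L^\infty$, and differentiating in $x$ gives $\pa_x u=W''*\rho=-\rho+w*\rho$; thus $u\in BV(\RR)$ for a.e.\ $t$, its jumps come exactly from the atoms of $\rho$, and each atom of mass $m\ge0$ produces a \emph{downward} jump $u(x^+)-u(x^-)=-m$. In particular the entropy condition \eqref{entropycond}, $\pa_x u\le w*\rho$, is nothing but the nonnegativity of $\rho$. I would also observe that the whole right-hand side $S:=a(u)\,w*\rho+\pa_x(w*A(u))-w*(a(u)\,w*\rho)$ of \eqref{eq:U} is a genuine function with no concentrated part: $w*\rho$ is continuous, and $\pa_x(w*A(u))=w*(\pa_x A(u))$ is the convolution of $w$ with the finite measure $\pa_x A(u)$, hence again a function.

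\textbf{The entropy inequality.} Because $a'\ge0$ by \eqref{hyp_a}, the flux $A$ is convex, and the downward jumps of $u$ ($u_-\ge u_+$) are precisely the Lax-admissible shocks for a convex flux. Since $S$ carries no atomic part, the Rankine--Hugoniot relation and the admissibility condition at each discontinuity of $u$ are the same as for the homogeneous law $\pa_t u+\pa_x A(u)=0$; convexity of both $A$ and $\eta$ then gives the entropy dissipation $-\sigma[\eta(u)]+[q(u)]\le0$ at each shock of speed $\sigma$, with $q(x)=\int_0^x\eta'(y)a(y)\,dy$. Off the jump set the Vol'pert chain rule yields the pointwise equality $\pa_t\eta(u)+\pa_x q(u)=\eta'(u)S$, and adding the nonpositive jump contribution produces the distributional inequality \eqref{eq:etaU}. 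Equivalently, and perhaps more cleanly, I would argue by vanishing viscosity: for a smooth approximation $u_\eps$ solving the same balance law with an added term $\eps\pa_x^2u_\eps$, multiplying by $\eta'(u_\eps)$ and using $\eps\eta'(u_\eps)\pa_x^2u_\eps=\eps\pa_x^2\eta(u_\eps)-\eps\eta''(u_\eps)|\pa_x u_\eps|^2\le\eps\pa_x^2\eta(u_\eps)$ gives \eqref{eq:etaU} up to a term tending to $0$ in $\calD'$, and one passes to the limit.

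\textbf{Main difficulty.} The delicate point is not the identity \eqref{eq:U} but the rigorous bookkeeping at the discontinuities of $u$: one must justify the Vol'pert calculus for $\eta(u)$ and $q(u)$, verify that the nonlocal source $S$ has no concentrated part so that it does not interfere with the Rankine--Hugoniot balance, and control the joint space--time $BV$ structure so that the jump term is well defined and carries the sign dictated by convexity. In the viscosity approach the same difficulty reappears as the need to establish strong convergence $u_\eps\to u$ and convergence of the regularized source, which is precisely where the one-sided estimate of Lemma \ref{aOSL} and the compactness of duality solutions from Theorem \ref{ExistDuality} are used.
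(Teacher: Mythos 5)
Note first that the survey gives no proof of Proposition \ref{entropy}: it is quoted from \cite[Lemma 4.5]{NoDEA}, so your attempt can only be measured against the standard argument behind that citation. Your first part is complete and correct: convolving \eqref{eqrhodis} with $W'$, using $W''=-\delta_0+w$ from \eqref{hyp2}, and substituting \eqref{DefFluxJ} is exactly the intended derivation of \eqref{eq:U}, and the products involved are licit for the reasons you give. Your second part also rests on the right ideas: $\pa_x u=-\rho+w*\rho$, so \eqref{entropycond} is precisely the nonnegativity of $\rho$ and every jump of $u$ is downward; the source $S$ is a bounded function, so it charges no rectifiable curve and Rankine--Hugoniot across each discontinuity of $u$ reads as for the homogeneous law; downward jumps are Oleinik/Lax admissible for the convex flux $A$, hence dissipate every convex entropy, and Vol'pert's chain rule handles the diffuse part.

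Two corrections, however. First, what you list as the ``main difficulty'' --- the joint space--time $BV$ structure --- is not an open issue but a one-line consequence of the equation itself, and your proof is incomplete until you say so: since $\rho(t)\in\calP_1(\RR)$ and $\|w*\rho(t)\|_{L^1}\le w_0$, one has $|\pa_x u(t,\cdot)|(\RR)\le 1+w_0$ for every $t$, hence $|\pa_x A(u)|\le \big(\sup_{|v|\le\|u\|_\infty}|a(v)|\big)\,|\pa_x u|$ is a uniformly bounded measure, and then $\pa_t u=-\pa_x A(u)+S$ with $S\in L^\infty$ shows that $\pa_t u$ is a locally finite measure as well; thus $u\in BV_{loc}((0,T)\times\RR)$ and the structure theorem and chain rule you invoke do apply. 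Second, the vanishing-viscosity variant you offer as ``perhaps more cleanly'' should be deleted rather than repaired: the proposition must hold for an \emph{arbitrary} distributional solution of \eqref{DefFluxJ}--\eqref{eqrhodis}, and there is no way to represent the given $u$ as a limit of viscous approximations without invoking precisely the uniqueness that inequality \eqref{eq:etaU} is designed to establish; as written, that route is circular. With the $BV$ bound made explicit and the viscosity paragraph removed, your argument is a correct proof along the classical lines of the cited reference.
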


This entropy estimate allows to deal with uniqueness.
Consider two solutions $\rho_1$ and $\rho_2$ with initial data
$\rho_1^{ini}$ and $\rho_2^{ini}$, as in Theorem \ref{dual_anonid}.
Since $\rho_1$ and $\rho_2$ are nonnegative, we deduce from \eqref{hyp2}
that both $u_1:=W'*\rho_1$ and $u_2=W'*\rho_2$ satisfy \eqref{entropycond}.
Starting from the entropy inequality \eqref{eq:etaU} with the family 
of Kru\v{z}kov entropies $\eta_\kappa(u) = |u-\kappa|$ and using the
doubling of variable technique developed by Kru\v{z}kov, 
we show
$$
\begin{array}{ll}
\ds \frac{d}{dt} \int_\RR \big|u_1-u_2\big| \leq & 
\|w\|_{Lip} \int_\RR \big|A(u_1)-A(u_2)\big|\,dx   \\
& \ds + \big(1+\|w\|_\infty\big) \int_\RR \big|a(u_1)w*\rho_1-a(u_2)w*\rho_2\big|\,dx.
\end{array}
$$
From \eqref{hyp_a} and the bound on $\rho(t)$ in $\calP_1(\RR)$ for all $t$, 
we deduce that $u_i$, $i=1,2$ are bounded in $L^\infty_{t,x}$.
Then we get
\begin{equation}\label{uniqU}
\frac{d}{dt} \int_\RR \big|u_1-u_2\big| \leq C\Big(
\int_\RR \big|u_1-u_2\big|\,dx + \int_\RR \big|w*\rho_1-w*\rho_2\big|\,dx\Big),
\end{equation}
where we use once again assumption on $a$ in \eqref{hyp_a}.
Taking the convolution with $w$ of equation \eqref{eqrhodis} we deduce
$$
\pa_t w*\rho_i - \pa_x\big(w*A(u_i)\big) + w*\big(a(u_i)w*\rho_i\big)=0, \qquad i=1,2.
$$
We deduce from \eqref{hyp_a} and the Lipschitz bound on $w$ that
\begin{equation}\label{uniqw}
\frac{d}{dt} \int_\RR \big|w*\rho_1-w*\rho_2\big| \leq C\Big(
\int_\RR \big|u_1-u_2\big|\,dx + \int_\RR \big|w*\rho_1-w*\rho_2\big|\,dx\Big).
\end{equation}
Summing \eqref{uniqU} and \eqref{uniqw}, we deduce applying a Gronwall lemma that
for all $T>0$ there exists a nonnegative constant $C_T$ such that for all $t\in [0,T]$,
$$
\begin{array}{l}
\ds \int_{\RR} \Big(\big|W'*\rho_1-W'*\rho_2\big|(t)+\big|w*\rho_1-w*\rho_2\big|(t)\Big)\,dx   \\
\ds \leq C_T
\int_{\RR} \Big(\big|W'*\rho_1^{ini}-W'*\rho_2^{ini}\big|+\big|w*\rho_1^{ini}-w*\rho_2^{ini}\big|\Big)\,dx.
\end{array}
$$
The uniqueness follows easily.

\section{Numerical simulations}\label{num}

An important advantage of the approach presented in this paper, is that it allows
to prove convergence of well designed finite volume schemes. 
Numerical simulations of duality solutions for linear scalar conservation laws with
discontinuous coefficients have been proposed and analyzed in \cite{GJ}.
In the present nonlinear context, a careful discretization
of the velocity has to be implemented in order to recover the dynamics of
aggregates after blow up time.
In \cite{sinum} a finite volume scheme of Lax-Friedrichs type is proposed and analyzed.
Up to our knowledge, this is the only example of numerical scheme allowing to recover
the dynamics of measure solutions after blow up.
In this paper, we perform the same analysis on an upwind-type scheme, which is less diffusive than the Lax-Friedrichs
scheme of \cite{sinum}. Numerical simulations are also proposed.

\subsection{Upwind finite volume scheme}

Let us consider a mesh with constant space step $\Delta x$, and denote $x_i=i \Delta x$ for $i\in \ZZ$.
We fix a constant  time step $\Delta t$, and set $t_n=n \Delta t$ for $n\in \NN$.
For a given nonnegative measure $\rho^{ini}\in \calP_1(\RR)$, we define for $i\in \ZZ$,
	\begin{equation}\label{disrho0}
\rho_{i}^0= \frac{1}{\Delta x}\int_{C_{i}} \rho^{ini}(dx)\geq 0.
	\end{equation} 
Since $\rho^{ini}$ is a probability measure, the total mass of the system is 
$\sum_{i} \Delta x\rho_{i}^0 = 1$.
Assuming that an approximating sequence $(\rho_{i}^n)_{i\in \ZZ}$ is known at time $t_n$, we
compute the approximation at time $t_{n+1}$ by:
\begin{equation}\label{dis_num}
\rho_{i}^{n+1} = \rho_{i}^n - \frac{\Delta t}{\Delta x}
\big(({a}^n_{i})_+^{} \rho_{i}^n + ({a}^n_{i+1})_-^{} \rho_{i+1}^n 
-({a}^n_{i-1})_+^{} \rho_{i-1}^n - ({a}^n_{i})_-^{} \rho_{i}^n \big) 
\end{equation}
The notation $(a)_+ = \max\{0,a\}$ stands for the positive part of the real $a$
and respectively $(a)_- = \min\{0,a\}$ for the negative part.
Then we define the flux by
\begin{equation}\label{eq:Jdis}
J_{i+1/2}^n = ({a}^n_{i})_+^{} \rho_{i}^n + ({a}^n_{i+1})_-^{} \rho_{i+1}^n
\end{equation}

A key point is the definition of the discrete velocity which
should be done in accordance with \eqref{achapo} in the linear case
and with \eqref{DefFluxJ} in the nonlinear case.

{\bf In the linear case}, the discretization of the velocity is given by 
\begin{equation}\label{def:ai}
a_i^n = \achapo(t_n,x_i) = \sum_{j\neq i} W'(x_i-x_j) \rho_j^n \Delta x.
\end{equation}

{\bf In the nonlinear case}, we define
\begin{equation}\label{eq:ai}
 a_{i}^n = \left\{\begin{array}{ll}
 a(\pa_xS_{i+1/2}), &  \mbox{ if }\ \pa_xS_{i+1/2}^n= \pa_xS_{i-1/2}^n,  \\[2mm]
\displaystyle \frac{A(\pa_xS_{i+1/2}^n)-A(\pa_xS_{i-1/2}^n)}{\pa_xS_{i+1/2}^n-\pa_xS_{i-1/2}^n}, \quad & \mbox{ otherwise.}
\end{array}\right.
\end{equation}
In this definition we have set $S_i^n$ an approximation of $W*\rho(t_n,x_i)$. 
Using \eqref{hyp2}, $S_i^n$ is a solution to
\begin{equation}\label{eq:Vwdis}
-\frac{\pa_xS_{i+1/2}^n-\pa_xS_{i-1/2}^n}{\Delta x}+\nu_i^n = \rho_i^n, \qquad
\pa_x S_{i+1/2}^n = \frac{S_{i+1}^n-S_i^n}{\Delta x}.
\end{equation}
The quantity $(\nu_i^n)_i$ corresponds to an approximation of $(w*\rho(t_n,x_i))_i$.
We will use the following discretization
\begin{equation}\label{eq:nu}
\nu_i^n = \sum_{k\in \ZZ} \rho_k^n w_{ki} \Delta x, \quad \mbox{ with } \quad 
\frac 12 (w_{ki}+w_{ki+1})=\frac{1}{\Delta x}\int_{x_i}^{x_{i+1}} w(x-x_k)\,dx.
\end{equation}
Then using \eqref{eq:ai} and \eqref{eq:Vwdis} we recover the discretization of the product
\begin{equation}\label{proddis}
a_i^n \rho_i^n = a_{i}^n \Big(-\frac{\pa_xS_{i+1}^n-\pa_xS_{i}^n}{\Delta x} + \nu_i^n\Big)
 = -\frac{A(\pa_xS_{i+1/2}^n)-A(\pa_xS_{i-1/2}^n)}{\Delta x} + a_i^n \nu_i^n.
\end{equation}

\begin{remark}
If we choose for the nonlinear function $a$ the identity function $a(x)=x$, then we can verify 
that \eqref{eq:ai} reduces to \eqref{def:ai}. Indeed, in this case, we have $A(x)=x^2/2$, such that
\eqref{eq:ai} reduces to $a_i^n = \frac 12 (\pa_xS_{i+1/2}^n+\pa_xS_{i-1/2}^n)$.
Moreover, from \eqref{eq:Vwdis}, we deduce that 
\begin{equation}\label{eqrem1}
\pa_xS_{i+1/2}^n+\pa_xS_{i-1/2}^n = -\sum_{j\neq i} \sgn(x_i-x_j)(\rho_j^n-\nu_j^n)\Delta x.
\end{equation}
From our choice of discretization in \eqref{eq:nu}, we have
	$$
\sum_{j\neq i} \sgn(x_i-x_j)\nu_j^n = \sum_{k\in \ZZ} \rho_k^n \left(\sum_{j<i} w_{kj} -\sum_{j>i} w_{kj}\right)\Delta x.
	$$
And by definition of $w_{kj}$ in \eqref{eq:nu}, we obtain
	$$
\sum_{j<i} w_{kj} -\sum_{j>i} w_{kj} = \frac{1}{\Delta x} \left(\int_{-\infty}^{x_i} w(x-x_k) dx
- \int_{x_i}^{+\infty} w(x-x_k)dx\right).
	$$
Denoting by $\widetilde{w}(x)=\frac{1}{2}(\int_{-\infty}^{x} w(y) dy - \int_{x}^{+\infty} w(y)dy)$,
which is an antiderivative of $w$, we deduce from \eqref{eqrem1} that
	$$
\frac 12 (\pa_xS_{i+1/2}^n+\pa_xS_{i-1/2}^n) = \sum_{j\neq i} \left(\frac 12 \sgn(x_i-x_j)
+\widetilde{w}(x_i-x_j)\right) \rho_j^n \Delta x.
	$$
From \eqref{hyp2}, we deduce that
$W'(x)=-\frac 12 \sgn(x)+\widetilde{w}(x)$ so that
	$$
a_i^n = \frac 12 (\pa_xS_{i+1/2}^n+\pa_xS_{i-1/2}^n) = \sum_{j\neq i} W'(x_i-x_j)\rho_j^n \Delta x,
	$$
which is equation \eqref{def:ai}.
\end{remark}

\subsection{Properties of the scheme}

The following Lemma states a Courant-Friedrichs-Lewy (CFL)-like condition for the scheme.
\begin{lemma}\label{lem:CFL}
Let $\rho^{ini}\in \calP_1(\RR)$.
We define $\rho_{i}^0$ by \eqref{disrho0}.
Let us assume that the following condition is satisfied:
\begin{equation}\label{CFL}
a_\infty \frac{\Delta t}{\Delta x} \leq 1,
\end{equation}
where 
	$$
a_\infty := \left\{\begin{array}{ll}
\ds \hfill\|W\|_{Lip},& \mbox{ in the linear case,}  \\\\
\ds \max_{x\in [-(1+w_0),(1+w_0)]}|a(x)|, & \mbox{ in the nonlinear case.}
\end{array}\right.
	$$
Then the sequence computed thanks to the scheme defined in \eqref{dis_num}
satisfies $\rho_{i}^n \geq 0$ and $|a_i^n|\leq a_\infty$, for all $i\in\ZZ$ and $n\in\NN$, .
\end{lemma}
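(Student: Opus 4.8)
The plan is to argue by induction on the time index $n$, carrying the nonnegativity $\rho_i^n\ge 0$ together with the mass normalisation $\sum_{i\in\ZZ}\rho_i^n\Delta x=1$, and to extract the velocity bound $|a_i^n|\le a_\infty$ as an intermediate consequence at each level. The base case $n=0$ is immediate: $\rho_i^0\ge 0$ by \eqref{disrho0} and the nonnegativity of $\rho^{ini}$, while $\sum_i\rho_i^0\Delta x=1$ because $\rho^{ini}$ is a probability measure.

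For the induction I would first deduce the velocity bound from the hypotheses $\rho_i^n\ge 0$ and $\sum_i\rho_i^n\Delta x=1$. In the linear case this is direct: from \eqref{def:ai} and $|W'|\le\|W\|_{Lip}$,
\[
|a_i^n|\le\|W\|_{Lip}\sum_{j\neq i}\rho_j^n\Delta x\le\|W\|_{Lip}=a_\infty .
\]
In the nonlinear case, the mean value theorem applied to the difference quotient in \eqref{eq:ai} (recall $A'=a$) shows that in either branch $a_i^n=a(\xi)$ for some $\xi$ between $\pa_x S_{i-1/2}^n$ and $\pa_x S_{i+1/2}^n$; it then suffices to prove $|\pa_x S_{i\pm 1/2}^n|\le 1+w_0$, after which $|a_i^n|=|a(\xi)|\le a_\infty$ by the very definition of $a_\infty$.

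Next I would establish the nonnegativity of $\rho_i^{n+1}$ by rewriting \eqref{dis_num}, using $(a)_+-(a)_-=|a|$, in the form
\[
\rho_i^{n+1}=\Big(1-\frac{\Delta t}{\Delta x}|a_i^n|\Big)\rho_i^n+\frac{\Delta t}{\Delta x}(a_{i-1}^n)_+\rho_{i-1}^n-\frac{\Delta t}{\Delta x}(a_{i+1}^n)_-\rho_{i+1}^n .
\]
The coefficients of $\rho_{i-1}^n$ and $\rho_{i+1}^n$ are nonnegative since $(a_{i-1}^n)_+\ge 0$ and $-(a_{i+1}^n)_-\ge 0$, while the diagonal coefficient is nonnegative precisely by the CFL condition \eqref{CFL} together with $|a_i^n|\le a_\infty$. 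As $\rho_{i-1}^n,\rho_i^n,\rho_{i+1}^n\ge 0$ by hypothesis, we conclude $\rho_i^{n+1}\ge 0$. Summing \eqref{dis_num} over $i\in\ZZ$ makes the flux terms telescope, whence $\sum_i\rho_i^{n+1}\Delta x=\sum_i\rho_i^n\Delta x=1$, which closes the induction.

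The main obstacle is the estimate $|\pa_x S_{i+1/2}^n|\le 1+w_0$ in the nonlinear case. Here I would solve the discrete relation \eqref{eq:Vwdis}, namely $\pa_x S_{i+1/2}^n-\pa_x S_{i-1/2}^n=\Delta x(\nu_i^n-\rho_i^n)$, by a symmetric summation from $\pm\infty$ analogous to the one producing \eqref{eqrem1}; the additive constant is fixed by noting that the limits of $\pa_x S^n$ at $\pm\infty$ are opposite, owing to the oddness of $W'$ encoded in \eqref{hyp2}, so their mean cancels. This yields
\[
\pa_x S_{i+1/2}^n=\Big(\tfrac12-\sum_{k\le i}\rho_k^n\Delta x\Big)+\tfrac12\Big(\sum_{k\le i}\nu_k^n\Delta x-\sum_{k>i}\nu_k^n\Delta x\Big).
\]
The first bracket lies in $[-\tfrac12,\tfrac12]$ since $\sum_{k\le i}\rho_k^n\Delta x\in[0,1]$, and the second is bounded by $\tfrac12\sum_k|\nu_k^n|\Delta x$. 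To control the latter I would use $\nu_k^n=\sum_\ell\rho_\ell^n w_{\ell k}\Delta x$ with $\rho_\ell^n\ge 0$, together with the consistency bound $\sum_k|w_{\ell k}|\Delta x\le w_0=\|w\|_{L^1}$ inherent in \eqref{eq:nu}, to get $\sum_k|\nu_k^n|\Delta x\le w_0$. Adding the two contributions gives $|\pa_x S_{i+1/2}^n|\le\tfrac12(1+w_0)\le 1+w_0$, the required control; this summation-plus-normalisation argument is the only genuinely delicate point.
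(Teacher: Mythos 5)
Your proof is correct and follows the paper's strategy in all essentials: induction on $n$ carrying nonnegativity and unit mass, the rewriting of \eqref{dis_num} in the form \eqref{schemarho} whose three coefficients are nonnegative under \eqref{CFL} once $|a_i^n|\le a_\infty$ is known, the direct bound from \eqref{def:ai} in the linear case, and the mean value theorem applied to \eqref{eq:ai} together with the estimate $|\pa_x S_{i\pm1/2}^n|\le 1+w_0$ in the nonlinear case. The one place where you genuinely deviate is the reconstruction of $\pa_x S_{i+1/2}^n$ from \eqref{eq:Vwdis}, which determines $\pa_x S^n$ only up to an additive constant: the paper sums from $k=-\infty$ up to $i$ with the implicit normalization that $\pa_x S^n$ vanishes at $-\infty$, obtaining at once $|\pa_x S_{i+1/2}^n|=\Delta x\,\big|\sum_{k\le i}(\nu_k^n-\rho_k^n)\big|\le 1+w_0$, whereas you average the summations from $-\infty$ and from $+\infty$ under the normalization that the two limits are opposite (the discrete counterpart of the oddness of $W'$), which yields the slightly sharper bound $\tfrac12(1+w_0)$. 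Both normalizations are admissible readings of \eqref{eq:Vwdis} and both close the induction; yours has the additional merit of being the normalization the paper itself uses in the remark recovering \eqref{def:ai} from \eqref{eq:ai} via \eqref{eqrem1}, so it is arguably more consistent with the linear case. Note finally that both you and the paper invoke without proof the bound $\sum_k|\nu_k^n|\Delta x\le w_0$, which reduces to $\sum_i|w_{ki}|\Delta x\le w_0$ for the weights defined through \eqref{eq:nu}; this is the intended property of that discretization, and since the paper makes exactly the same ellipsis it cannot be held against you.
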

\begin{proof}
The total initial mass of the system is $\sum_{i} \rho_{i}^0 \Delta x=1$. 
Since the scheme \eqref{dis_num} is conservative, we have for all $n\in \NN$, 
$\sum_{i} \rho_{i}^n\Delta x=1$.

We can rewrite equation \eqref{dis_num} as
\begin{equation}
\rho_{i}^{n+1} = \rho_{i}^n \left( 1 - \frac{\Delta
t}{\Delta x} |{a}^n_{i}|\right) - \rho_{i+1}^n \frac{\Delta t}{\Delta x}({a}^n_{i+1})_-^{}
+ \rho_{i-1}^n \frac{\Delta t}{\Delta x}({a}^n_{i-1})_+^{}.
\label{schemarho}
\end{equation}

Then assuming that condition \eqref{CFL} holds, we prove the Lemma by induction on $n$. 
For $n=0$, we have $\rho_i^0\geq 0$ from \eqref{disrho0} and from \eqref{def:ai} 
in the linear case, we deduce that
$|a_i^0|\leq a_\infty$. In the nonlinear case, we have from \eqref{eq:Vwdis}
$$
|\pa_xS^0_{i+1/2}| = \Delta x |\sum_{k\leq i} (\nu_k^0-\rho_k^0)| \leq 1+w_0,
$$
Indeed from \eqref{eq:nu}, $\sum_i |\nu_i^0| \leq w_0$,
where $w_0$ is defined in \eqref{hyp2}.
Then using definition \eqref{eq:ai}, we obtain by the mean value theorem that 
$|a_i^0|\leq a_\infty$. Thus the result is satisfied for $n=0$.

By induction, we assume that the estimates hold for some $n\in \NN$.
Then, in the scheme \eqref{schemarho}, all the coefficients in front of $\rho_{i}^n$, 
$\rho_{i-1}^n$ and $\rho_{i+1}^n$ are nonnegative.
We deduce that the scheme is nonnegative therefore $\rho_i^{n+1} \geq 0$ for all $i\in \ZZ$ 
and $n\in \NN$.
Next, in the linear case, from \eqref{def:ai} and \eqref{hyp_a} we deduce that $|a_i^{n+1}|\leq a_\infty$;
in the nonlinear case, we have from \eqref{eq:Vwdis} and the mass conservation
$$
|\pa_xS^{n+1}_{i+1/2}| = \Delta x |\sum_{k\leq i} (\nu_k^{n+1}-\rho_k^{n+1})| \leq 1+w_0.
$$
As above, from the definition \eqref{eq:ai} and the mean value theorem, we deduce that
$|a_i^{n+1}|\leq a_\infty$.
\end{proof}

In the following Lemma, we gather some properties of the scheme.

\begin{lemma}\label{bounddismom}
Let $\rho_{i}^0$ defined by \eqref{disrho0} for some $\rho^{ini}\in \calP_1(\RR)$.
Let us assume that \eqref{CFL} is satisfied.
Then the sequence $(\rho_{i}^n)$ constructed thanks to the numerical scheme
\eqref{dis_num} satisfies:

$(i)$ Conservation of the mass:
for all $n\in \NN^*$, we have
\begin{eqnarray*}
&\ds \sum_{i\in \ZZ} \rho_{i}^n \Delta x=
\sum_{i\in \ZZ} \rho_{i}^0 \Delta x = 1\ , \\
\end{eqnarray*}

$(ii)$ Bound on the first moment: there exists a constant $C>0$ such that for all $n\in \NN^*$, we have
\begin{equation}\label{boundmoment1}
M_1^n := \sum_{i\in \ZZ} |x_i| \rho_{i}^n \Delta x \leq M_1^0 + a_\infty t_n.
\end{equation}
where $t_n=n\Delta t$.

$(iii)$ Support: Let us assume that $a_\infty\Delta t = \gamma \Delta x$ for $\gamma \in (0,1)$.
If $\rho^{ini}$ has a finite support then the numerical approximation at finite time
has a finite support too.
More precisely, assuming that $\rho^{ini}$ is compactly supported in $B(0,R)$,
then for any $T>0$, we have $\rho^n_i=0$ for any $n\leq T/\Delta t$ and any $i\in \ZZ$
such that $x_i \notin B(0,R+\frac{a_\infty T}{\gamma})$.
\end{lemma}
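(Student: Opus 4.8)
The plan is to prove the three assertions of Lemma~\ref{bounddismom} in sequence, each relying on the nonnegativity and velocity bound $|a_i^n|\leq a_\infty$ established in Lemma~\ref{lem:CFL} under the CFL condition~\eqref{CFL}.

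For part $(i)$, I would simply sum the scheme~\eqref{dis_num} over $i\in\ZZ$ and observe that it is written in conservative form: the flux terms telescope, so $\sum_i \rho_i^{n+1}\Delta x = \sum_i \rho_i^n\Delta x$. An induction on $n$ then gives $\sum_i\rho_i^n\Delta x = \sum_i\rho_i^0\Delta x = 1$, the last equality coming from the fact that $\rho^{ini}$ is a probability measure together with definition~\eqref{disrho0}. One technical point to verify is that the rearrangement of the (infinite) sum is legitimate; this is guaranteed by absolute convergence, which follows from $\rho_i^n\geq 0$ and the finiteness of the total mass.

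For part $(ii)$, I would multiply the scheme~\eqref{schemarho} by $|x_i|\Delta x$ and sum over $i$. Writing $M_1^{n+1}-M_1^n$ and performing a discrete summation by parts (an index shift to regroup the contributions of $\rho_i^n$, $\rho_{i-1}^n$, $\rho_{i+1}^n$), each mass $\rho_i^n$ gets transported by a discrete displacement bounded in absolute value by $\tfrac{\Delta t}{\Delta x}|a_i^n|\cdot \Delta x \leq a_\infty\Delta t$, using $|x_{i\pm1}|-|x_i|\leq \Delta x$. Combined with mass conservation $\sum_i\rho_i^n\Delta x=1$, this yields $M_1^{n+1}\leq M_1^n + a_\infty\Delta t$, and iterating gives $M_1^n\leq M_1^0 + a_\infty t_n$ as claimed in~\eqref{boundmoment1}.

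For part $(iii)$, under the sharper assumption $a_\infty\Delta t=\gamma\Delta x$ with $\gamma\in(0,1)$, the key observation is that the upwind scheme propagates support at finite speed: from~\eqref{schemarho}, $\rho_i^{n+1}$ depends only on $\rho_{i-1}^n,\rho_i^n,\rho_{i+1}^n$, so the numerical support can grow by at most one cell, i.e.\ $\Delta x$, per time step. I would prove by induction that if $\rho^n$ is supported in $B(0,R+n\,\Delta x)$ then so is $\rho^{n+1}$ in $B(0,R+(n+1)\Delta x)$; for $n\leq T/\Delta t$ this gives support in $B(0,R+\tfrac{T}{\Delta t}\Delta x)=B(0,R+\tfrac{a_\infty T}{\gamma})$, using $\Delta x=\tfrac{a_\infty\Delta t}{\gamma}$. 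The main obstacle, and the only place requiring genuine care rather than routine bookkeeping, is part $(ii)$: the summation-by-parts must be carried out carefully so that the boundary/index-shift terms are handled correctly and the velocity bound $|a_i^n|\leq a_\infty$ is applied to the right quantity; the fact that $|x|$ is only Lipschitz (not smooth) is harmless here since we only use $\big||x_{i\pm1}|-|x_i|\big|\leq\Delta x$.
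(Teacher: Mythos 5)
Your proposal is correct and follows essentially the same route as the paper's proof: mass conservation from the conservative (telescoping) form of the scheme, the first-moment bound via discrete summation by parts combined with $\bigl||x_{i\pm1}|-|x_i|\bigr|\leq\Delta x$ and the velocity bound $|a_i^n|\leq a_\infty$ from Lemma~\ref{lem:CFL}, and finite speed of propagation of the numerical support (one cell per time step) together with $\Delta x = a_\infty\Delta t/\gamma$ for part $(iii)$. Your write-up is in fact slightly more careful than the paper's on the justification of infinite-sum rearrangements and on the final length computation in $(iii)$.
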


\begin{proof}

$(i)$ It is a direct consequence of Lemma \ref{lem:CFL} and the fact that the scheme is conservative.

$(ii)$ For the first moment, we have from \eqref{dis_num} after using a discrete integration by parts:
$$
\begin{array}{ll}
\ds \sum_{i\in \ZZ} |x_i|\rho_{i}^{n+1} = & \ds \sum_{i\in
\ZZ} |x_i|\rho_{i}^n  \\
& \ds - \frac{\Delta t}{\Delta x} \sum_{i\in \ZZ} 
\left(({a}^n_{i})_+ \, \rho_{i}^n \big(|x_i|-|x_{i+1}|\big)
+({a}^n_{i})_- \, \rho_{i}^n \big(|x_{i-1}|-|x_{i}|\big) \right).
\end{array}
$$
From the definition $x_i=i\Delta x$, we deduce
$$
\sum_{i\in \ZZ} |x_i|\rho_{i}^{n+1}\Delta x \leq \sum_{i\in \ZZ}
|x_i|\rho_{i}^n \Delta x + \Delta t \sum_{i\in \ZZ} |a^n_{i}| \, \rho_{i}^n \Delta x.
$$
Since the velocity is bounded by $a_\infty$ from Lemma \ref{lem:CFL}, 
and using the mass conservation we get $M_1^{n+1} \leq M_1^n + a_\infty \Delta t$.
The conclusion follows directly by induction on $n$.

$(iii)$ By definition of the numerical scheme \eqref{dis_num}, we clearly have that 
the support increases of only $1$ point of discretisation at each time step. 
Therefore after $n$ iterations, the support has increased of 
$n\Delta t= n \Delta t a_\infty/\gamma \leq T a_\infty/\gamma$.

\end{proof}

\subsection{Convergence result}

We define the initial reconstruction 
\begin{equation}\label{rhoh} 
\rho_h^n(x) = \sum_{i\in \ZZ} \rho_{i}^n \Delta x \delta_{x_i}.
\end{equation}
Then we construct $\rho_h(t,x) = \sum_{n=0}^{N_T} \rho_h^n(x) {\mathbf 1}_{[t_n,t_{n+1})}$,
where $N_T = T/\Delta t$.
The following result proves the convergence of this approximation towards the unique duality
solution of the aggregation equation.
A similar result for the Lax-Friedrichs scheme has been proved in \cite[Theorem 3.4]{sinum}.

\begin{theorem}\label{th:convmacro}
Let us assume that $\rho^{ini}\in \calP_1(\RR)$ is given,
compactly supported and nonnegative and define $\rho_i^0$ by \eqref{disrho0}.
\begin{itemize}
\item {\it In the linear case}, under assumptions \eqref{hyp1} and $a={\rm id}$, if (\ref{CFL}) is satisfied, 
then the discretization $\rho_h$ converges in $\calM_b([0,T]\times \RR)$ towards the 
unique duality solution $\rho$ of Theorem \ref{th:duality} 
as $\Delta t$ and $\Delta x$ go to $0$.
\item {\it In the nonlinear case}, under assumptions \eqref{hyp_a} and \eqref{hyp2}, if (\ref{CFL}) is satisfied,
then the discretization $\rho_h$ converges in $\calM_b([0,T]\times \RR)$ towards the 
unique duality solution $\rho$ of Theorem \ref{dual_anonid} 
as $\Delta t$ and $\Delta x$ go to $0$.
\end{itemize}
\end{theorem}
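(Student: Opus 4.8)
The plan is to prove convergence by a compactness--uniqueness argument, treating both cases in parallel and pointing out where the nonlinear structure demands extra care. First I would collect the a priori bounds already established: by Lemma \ref{lem:CFL} the reconstruction $\rho_h(t,\cdot)$ is a nonnegative measure of mass $1$, by Lemma \ref{bounddismom} its first moment is bounded uniformly on $[0,T]$ via \eqref{boundmoment1} (and its support stays in a fixed compact set when $\rho^{ini}$ is compactly supported), and the discrete velocities satisfy $|a_i^n|\leq a_\infty$. The mass bound together with the uniform moment and support control yields tightness, hence weak-$*$ relative compactness of $(\rho_h(t,\cdot))$ in $\calP_1(\RR)$. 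To upgrade this to compactness in $\smes$ I would establish equicontinuity in time: testing the scheme \eqref{dis_num} against a Lipschitz function $\phi$, summing by parts, and using the flux form \eqref{eq:Jdis} with $|a_i^n|\leq a_\infty$ gives $|\int\phi\,d\rho_h^{n+1}-\int\phi\,d\rho_h^n|\leq C\,\|\phi\|_{Lip}\,\Delta t$, i.e. a uniform modulus of continuity in time for the bounded-Lipschitz (equivalently Wasserstein-$1$) distance. An Arzel\`a--Ascoli argument then extracts a subsequence converging to some $\rho\in\smes$, and in particular $\rho_h\rightharpoonup\rho$ in $\calM_b([0,T]\times\RR)$.

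The heart of the proof is identifying $\rho$ as the duality solution, which amounts to passing to the limit in the discrete flux. \textbf{In the linear case}, the discrete velocity \eqref{def:ai} is precisely the discrete convolution $\sum_{j\neq i}W'(x_i-x_j)\rho_j^n\Delta x$, so the flux $\sum_i a_i^n\rho_i^n$ is a quadratic expression in $\rho_h$ with the diagonal removed. Here the delicate point is the discontinuity of $W'$ at the origin, precisely where Dirac masses form; this is handled by the stability Lemma \ref{lemstab_a}, which guarantees that the off-diagonal double integral passes to the limit against the correct representative $\achapo_\rho$ defined in \eqref{achapo}. I would reinterpret the discrete flux as such a double integral against a piecewise-constant approximation of $W'$ consistent with the scheme, and apply Lemma \ref{lemstab_a}. \textbf{In the nonlinear case}, I would instead exploit the decomposition \eqref{proddis}: the product $a_i^n\rho_i^n$ splits into the discrete increment of $A(\pa_xS_{i\pm1/2}^n)$ and the term $a_i^n\nu_i^n$. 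The second term converges easily since $w*\rho$ is Lipschitz and $\nu_i^n$ from \eqref{eq:nu} is a consistent discretization of $w*\rho$; the first converges by $BV$ stability of $A(W'*\rho_h)$, using that $W'*\rho_h\to W'*\rho$ and the continuity of $A$. Together these identify the limit flux with $J$ from \eqref{DefFluxJ}.

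With the flux identified, I would pass to the limit in the distributional formulation of \eqref{dis_num} to obtain $\pa_t\rho+\pa_x J=0$ in $\calD'$, i.e. \eqref{rhodis}--\eqref{eqrhodis}. It then remains to check that $\rho$ is a genuine duality solution: the limiting velocity $\achapo_\rho$ inherits the one-sided Lipschitz bound (Lemma \ref{achapoOSL} in the linear case, Lemma \ref{aOSL} in the nonlinear case), so it satisfies \eqref{OSLC}, and the stability statement of Theorem \ref{ExistDuality} together with the equivalence Theorem \ref{dual2distrib} guarantees that the distributional limit is the duality solution associated with that velocity. Finally, uniqueness from Theorem \ref{th:duality} (resp.\ Theorem \ref{dual_anonid}) shows the limit is independent of the extracted subsequence, so the whole family $\rho_h$ converges.

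I expect the main obstacle to be the identification of the limit flux rather than the compactness, and within that the concentration phenomenon: near points where mass aggregates the velocity is discontinuous, so naive weak convergence of products fails. The linear case is saved by the diagonal-excluded convergence of Lemma \ref{lemstab_a}, and the nonlinear case by rewriting the flux in the conservative $BV$ form \eqref{proddis}, so that only the continuous functions $A$ and $w*\rho$ --- rather than the discontinuous velocity itself --- need to be passed to the limit. Care is also needed to ensure that the difference-quotient definition \eqref{eq:ai} of $a_i^n$ is consistent with \eqref{DefFluxJ}, which is exactly what makes the decomposition \eqref{proddis} exact at the discrete level.
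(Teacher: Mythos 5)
Your proposal is correct, and its core steps coincide with the paper's: the limit flux is identified via Lemma \ref{lemstab_a} in the linear case and via the exact discrete decomposition \eqref{proddis} in the nonlinear case, and the uniqueness statements of Theorems \ref{th:duality} and \ref{dual_anonid} are then invoked to identify the limit and to upgrade subsequence convergence to convergence of the whole family. Where you genuinely diverge is the compactness step. The paper works with the cumulative distribution $M_h(t,x)=\int_{-\infty}^x\rho_h(t,dy)$: summing the scheme \eqref{dis_num} gives an update for $M_i^n$ which, written with the incremental coefficients $b_{i+1/2}^n=-(a_{i+1}^n)_-\Delta t/\Delta x$ and $c_{i-1/2}^n=(a_i^n)_+\Delta t/\Delta x$ of Harten and Le Roux \cite{LeRoux,Harten}, is TVD under \eqref{CFL}; a $BV([0,T]\times\RR)$ bound, Helly's theorem and a diagonal extraction then give $M_h\to\widetilde M$ in $L^1_{loc}$ and $\rho_h\to\rho:=\pa_x\widetilde M$ in $\calM_b$. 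You instead use tightness (from the moment and support bounds of Lemma \ref{bounddismom}) together with the time-equicontinuity estimate
$\bigl|\int\phi\,d\rho_h^{n+1}-\int\phi\,d\rho_h^n\bigr|\le 2a_\infty\|\phi\|_{Lip}\Delta t$,
which indeed follows from \eqref{eq:Jdis}, the bound $|a_i^n|\le a_\infty$ of Lemma \ref{lem:CFL} and mass conservation, and you conclude by Arzel\`a--Ascoli in the bounded-Lipschitz (Wasserstein-$1$) metric. Both routes are valid and both consume the CFL condition; the paper's BV route yields strong $L^1_{loc}$ convergence of the primitives, the classical conservation-law toolkit, while yours stays at the level of measures, delivers a limit directly in $\smes$ (time continuity comes for free from equicontinuity, modulo the standard remark that the piecewise-constant-in-time reconstruction is only equicontinuous up to jumps of size $O(\Delta t)$), and avoids the incremental-coefficient machinery --- arguably more natural in the measure-solution setting. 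One shared point of looseness, not a gap specific to your argument: Lemma \ref{lemstab_a} is literally stated for smoothed potentials $W_n\in C^1$, whereas what the linear flux identification needs is weak stability of the off-diagonal double integral $\iint_{x\neq y}\phi(x)W'(x-y)\rho_h(dx)\rho_h(dy)$ (which is exactly what the discrete flux built from \eqref{def:ai} equals); the paper invokes the lemma in the same slightly informal way.
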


\begin{proof}
The proof follows closely the ideas of \cite[Theorem 3.4]{sinum}, which are adapted to
the upwind scheme.
First we define $M_h(t,x)=\int_{-\infty}^x \rho_h(t,dy)$. This is a piecewise constant function:
on $[t_n,t_{n+1})\times[x_i,x_{i+1})$ we have $M_h(t)=M_i^n:= \sum_{k\leq i} \rho_k^n \Delta x$.
After a summation of \eqref{dis_num}, we deduce
	$$
M_i^{n+1} = M_i^n \left(1-\Big((a_i^n)_+^{}-(a_{i+1}^n)_-^{}\Big)\frac{\Delta t}{\Delta x}\right)
+ (a_i^n)_+^{} \frac{\Delta t}{\Delta x} M_{i-1}^n - (a_{i+1}^n)_-^{} \frac{\Delta t}{\Delta x} M_{i+1}^n.
	$$
Introducing the incremental coefficients as in Harten and Le Roux \cite{LeRoux,Harten} 
	$$
b_{i+1/2}^n = -(a_{i+1}^n)_-\frac{\Delta t}{\Delta x}; \qquad 
c_{i-1/2}^n = (a_i^n)_+\frac{\Delta t}{\Delta x},
	$$
we can rewrite the latter equation as
	$$
M_i^{n+1} = M_i^n + c_{i-1/2}^n (M_{i-1}^n-M_i^n) + b_{i+1/2}^n (M_{i+1}^n-M_i^n).
	$$
Provided the CFL condition of Lemma \ref{lem:CFL} is satisfied, we have
$0\leq b_{i+1/2}$, $0\leq c_{i+1/2}$ and $b_{i+1/2}+c_{i+1/2}\leq 1$, so that following \cite{LeRoux,Harten}
 the scheme is TVD provided the CFL condition holds.

It is now standard to prove prove a total variation in time which will imply a $BV([0,T]\times \RR)$ bound. Then we apply the Helly compactness Theorem 
to extract a subsequence of $M_h$ 
converging in $L^1_{loc}([0,T]\times \RR)$ towards some function $\widetilde{M}$.
Next we use a diagonal extraction procedure to extend the local convergence to the whole real line.
We refer the reader for instance to \cite{GR} for more details on these well-known techniques.
We deduce the convergence of $\rho_h$ towards $\rho:=\pa_x\widetilde{M}$ in $\calM_b([0,T]\times \RR)$.

By definition of $\rho_h^n$ in \eqref{rhoh}, we have for any test function $\phi$,
$$
\int_{\RR} \phi(x) \rho_h^{n+1}(dx) = \sum_{i\in\ZZ} \rho_i^{n+1} \phi(x_i) \Delta x.
$$
Then from the definition of the scheme \eqref{dis_num} and using a discrete integration
by parts, we get
$$
\begin{array}{l}
\ds \int_{\RR} \phi(x) \rho_h^{n+1}(dx) = \\
= \ds  \sum_{i\in\ZZ} \rho_i^{n} \Delta x \left[
\phi(x_i) -\frac{\Delta t}{\Delta x}\Big( (a_i^n)_+^{} \big(\phi(x_i)-\phi(x_{i+1})\big)
+(a_i^n)_-^{} \big(\phi(x_{i-1})-\phi(x_{i})\big)\Big)\right].
\end{array}
$$
Using a Taylor expansion, there exist $y_i\in (x_i,x_{i+1})$ and $z_i\in (x_{i-1},x_i)$
such that
$$
\int_{\RR} \phi(x) \rho_h^{n+1}(dx) = \sum_{i\in\ZZ} \rho_i^{n} \Delta x \left[
\phi(x_i) + a_i^n \phi'(x_i) \Delta t + \frac 12 \Delta t \Delta x \big(\phi''(y_i) + \phi''(z_i)\big)\right].
$$
Let us denote $a_h$ an affine interpolation of the sequence $(a_i^n)$ such that $a_h(t_n,x_i)=a_i^n$.
We have
$$
\begin{array}{ll}
\ds \int_{\RR} \phi(t,x) \frac{(\rho_h^{n+1}-\rho_h^n)(t,dx)}{\Delta t} = 
& \ds \int_{\RR} \phi'(t,x) a_h(t,x) \rho_h(t,dx)  \\
& \ds + \frac 12  \sum_{i\in\ZZ} \rho_i^{n} \Delta x^2\big(\phi''(y_i) + \phi''(z_i)\big).
\end{array}
$$
Passing to the limit in the latter identity, using Lemma \ref{lemstab_a} in the linear case
or \eqref{proddis} in the nonlinear case, we deduce that the limit $\rho$ satisfies in the sense 
of distributions equation \eqref{rhodis}. By uniqueness of such a solution, we deduce that 
$\rho$ is the unique duality solution in Theorem \ref{th:duality} in the linear case, or respectively
the unique duality solution in Theorem \ref{dual_anonid}, and the whole sequence converges.
\end{proof}

\subsection{Numerical results}
\label{sec:numsim}

To illustrate the behaviour of solutions, we propose numerical simulations obtained with 
scheme \eqref{dis_num} for three examples of interacting potential.
In these examples we choose the computational domain $[-2.5,2.5]$ discretized with 1000 intervals.
The time step is chosen in order to satisfy the CFL condition \eqref{CFL}.
We consider two initial data. In Figures \ref{fig1}, \ref{fig2} and \ref{fig3} Left, 
the initial data is given by a sum of two bumps:
\begin{equation}\label{init1}
\rho^{ini}(x)=\exp(-10(x-0.7)^2)+\exp(-10(x+0.7)^2).
\end{equation}
In Figures \ref{fig1}, \ref{fig2} and \ref{fig3} Right, 
the initial data is given by a sum of three bumps:
\begin{equation}\label{init2}
\rho^{ini}(x)=\exp(-10(x-1.25)^2)+0.8 \exp(-20x^2)+\exp(-10(x+1)^2).
\end{equation}

{\bf Example 1:}
Figure \ref{fig1} displays the time dynamics of the density $\rho$ if 
we take $W(x)=\frac 12 e^{-|x|}-\frac 12$ and $a(x)=\frac{2}{\pi} \mbox{atan}(50 x)$.
Figure \ref{fig1}-left gives the dynamics for the initial data in \eqref{init1}.
We observe the blow up into two numerical Dirac deltas in a very short time.
Then the two Dirac deltas aggregate into one single Dirac mass which is stationary.
A similar phenomenon is observed on Figure \ref{fig1}-right where the dynamics for the
initial data \eqref{init2} is plotted.

\begin{figure}[ht!]
\begin{center}
  \includegraphics[width=2.47in]{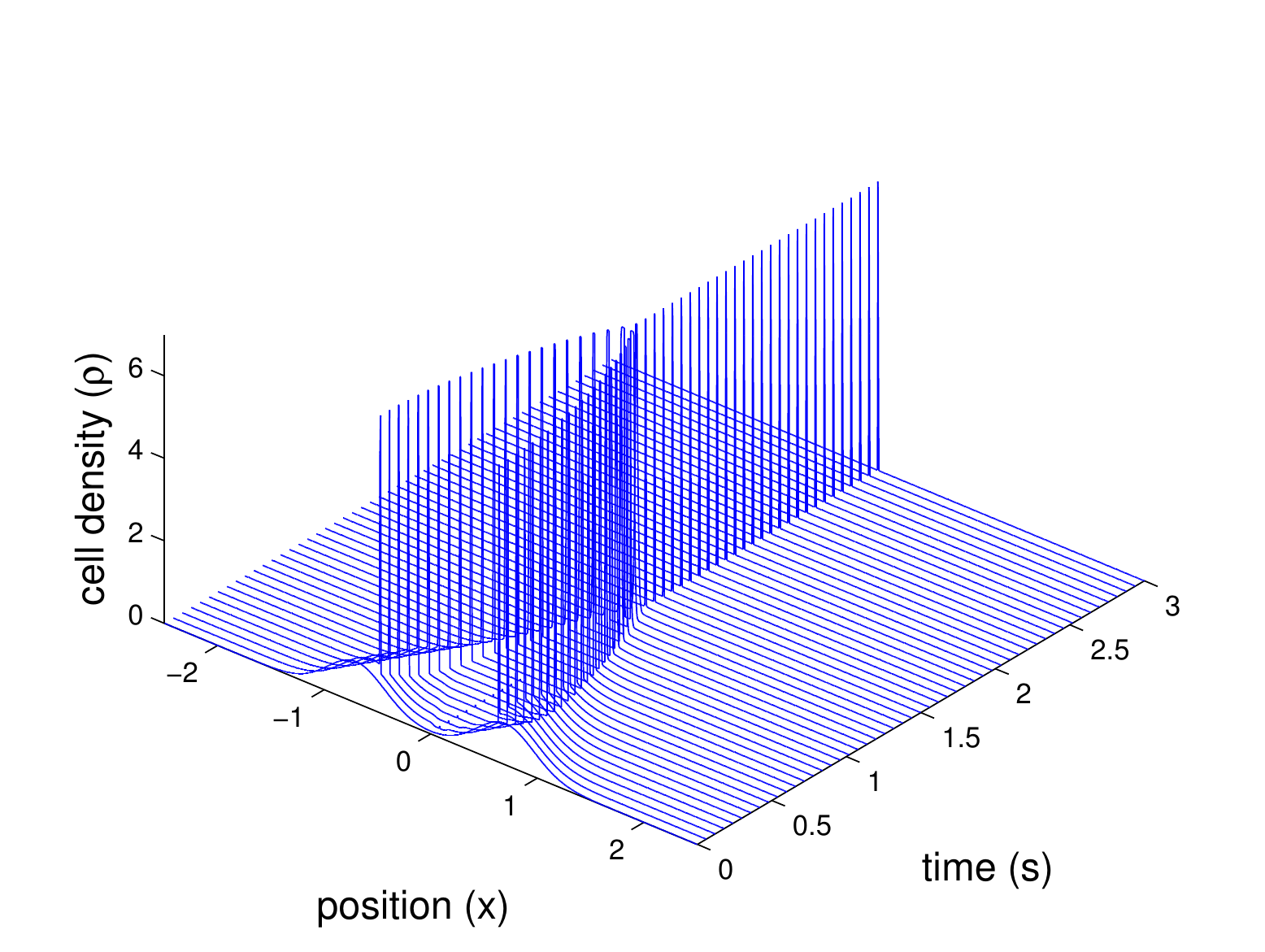}
  \includegraphics[width=2.47in]{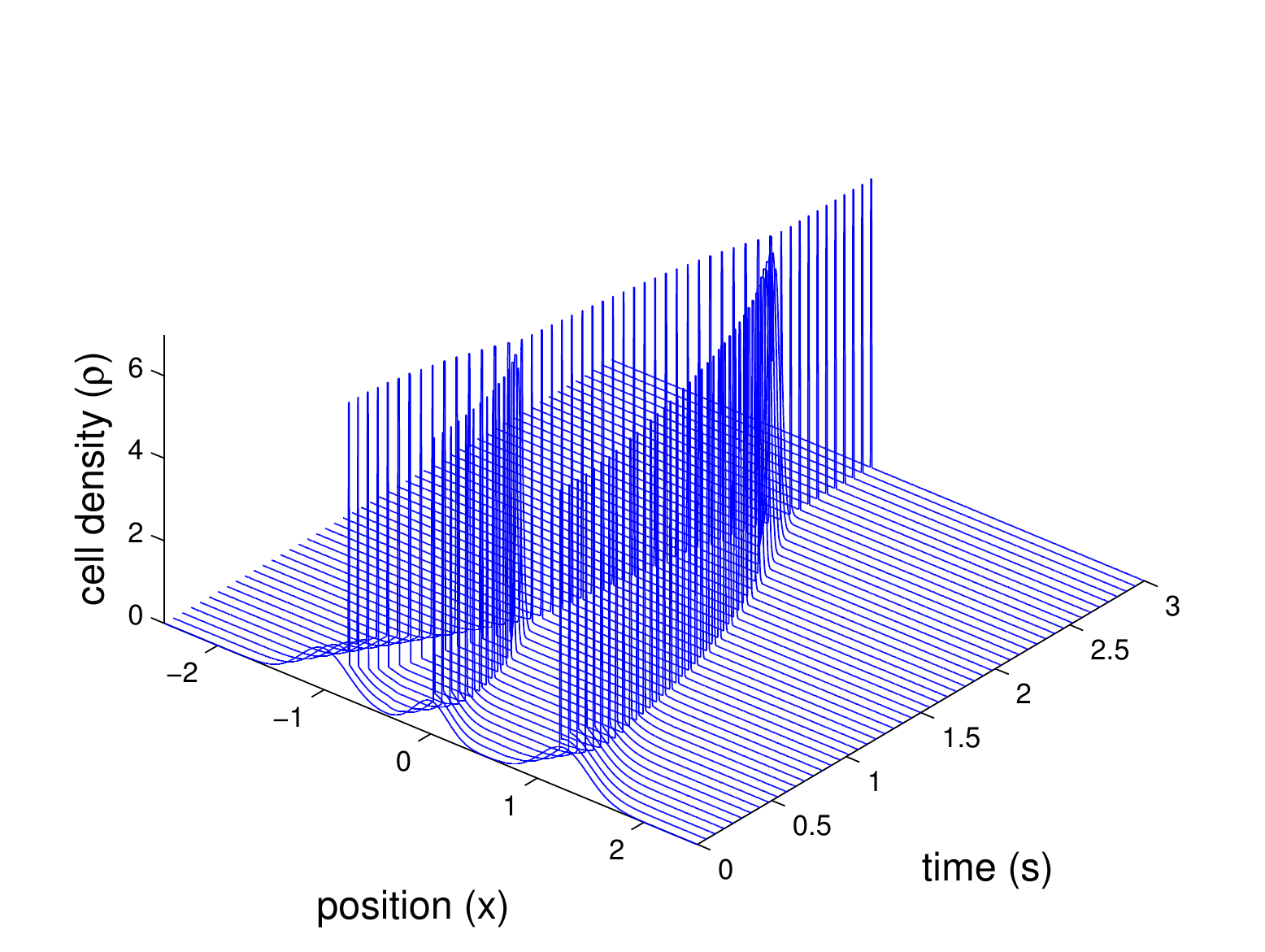}\\
  \caption{Dynamics of the density $\rho$ in the case $W=\frac 12 (e^{-|x|}-1)$
and for a nonlinear function $a(x)=\frac{2}{\pi} \mbox{atan}(50 x)$.}\label{fig1}
\end{center}
\end{figure}

{\bf Example 2:}
In Figure \ref{fig2} we display the time dynamics of the density $\rho$ for
$W(x)=-\frac{|x|}{250}$ and $a(x)=\frac{2}{\pi} \mbox{atan}(50 x)$.
Contrary to the first example, we observe that the blow up occurs in the center and all
bumps concentrate in this point to form a Dirac delta.

\begin{figure}[ht!]
\begin{center}
  \includegraphics[width=2.47in]{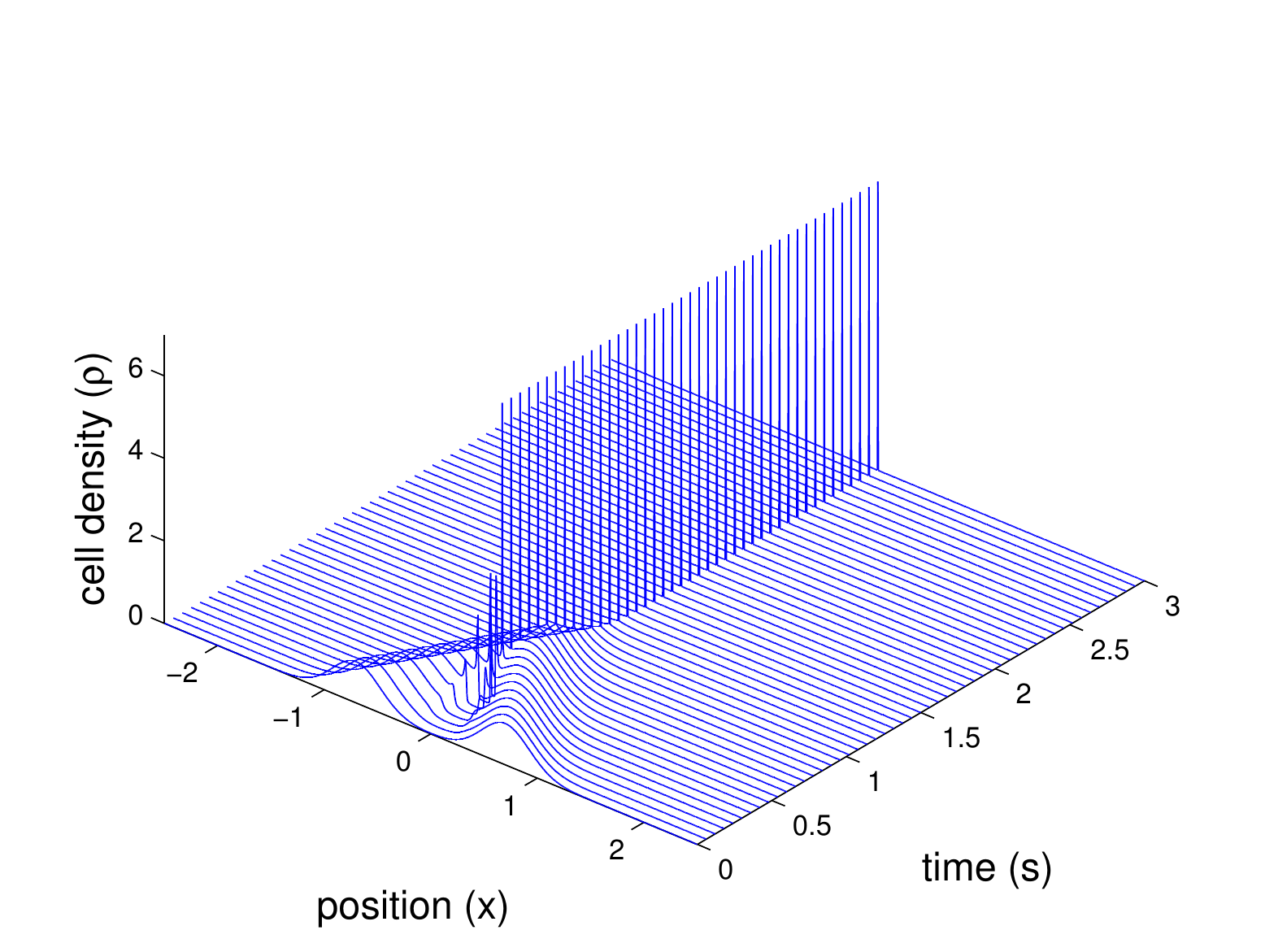}
  \includegraphics[width=2.47in]{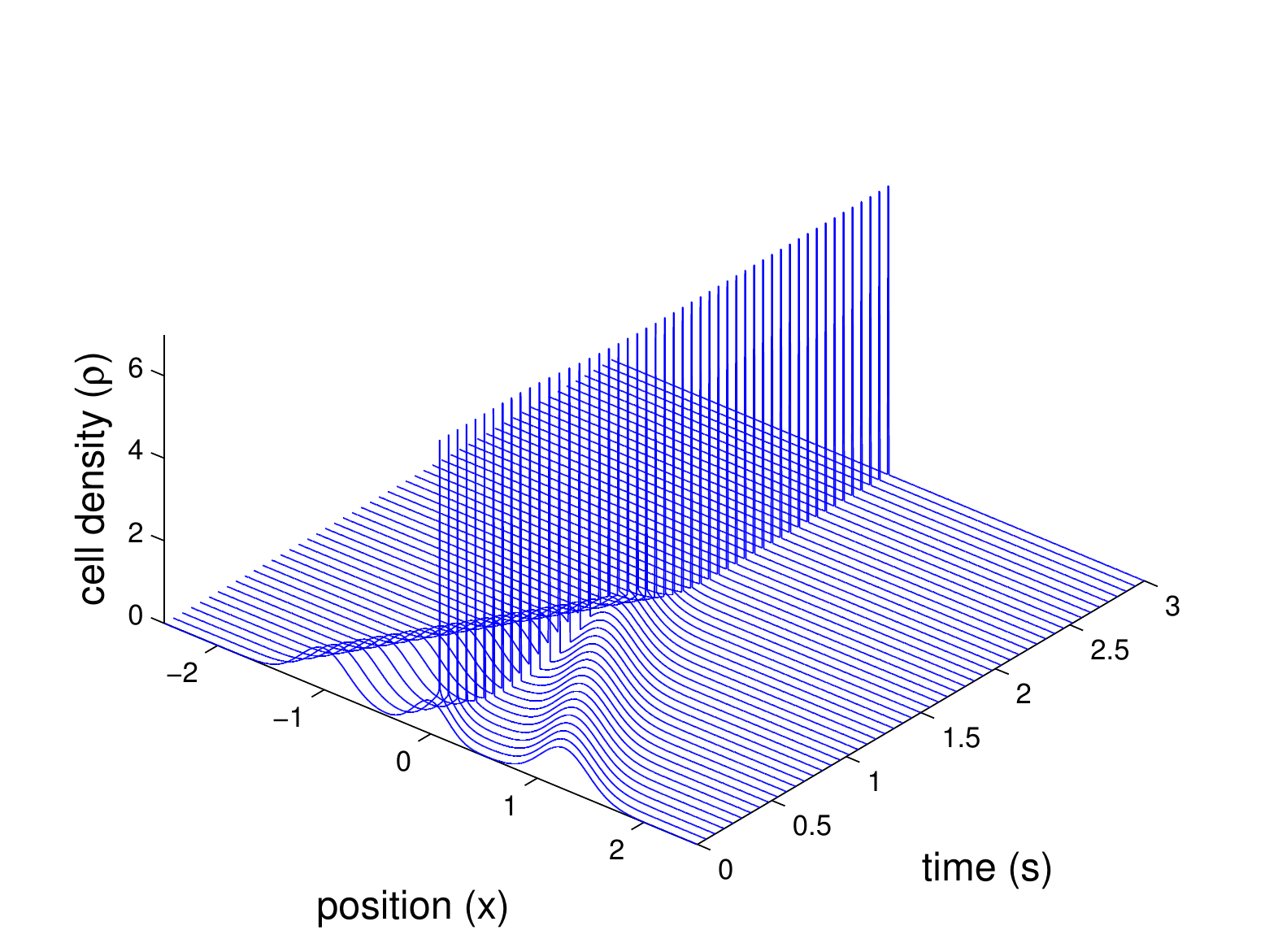}\\
  \caption{Dynamics of the density $\rho$ in the case $W=-\frac{|x|}{250}$
and for a nonlinear function $a(x)=\frac{2}{\pi} \mbox{atan}(50 x)$.}\label{fig2}
\end{center}
\end{figure}

{\bf Example 3:}
Finally, Figure \ref{fig3} displays the time dynamics of the density $\rho$ when
$W(x)=-\frac{|x|}{250}$ and in the linear case $a(x)=x$.
In this last example, the bumps attract themselves and blow up in the same time.
Then in Figure \ref{fig3}-left, with initial data \eqref{init1}, the blow up occurs when the two initial bumps are close to each other.
In Figure \ref{fig3}-right, with initial data \eqref{init2}, the bump in the center blows up before 
the external ones.

\begin{figure}[ht!]
\begin{center}
  \includegraphics[width=2.47in]{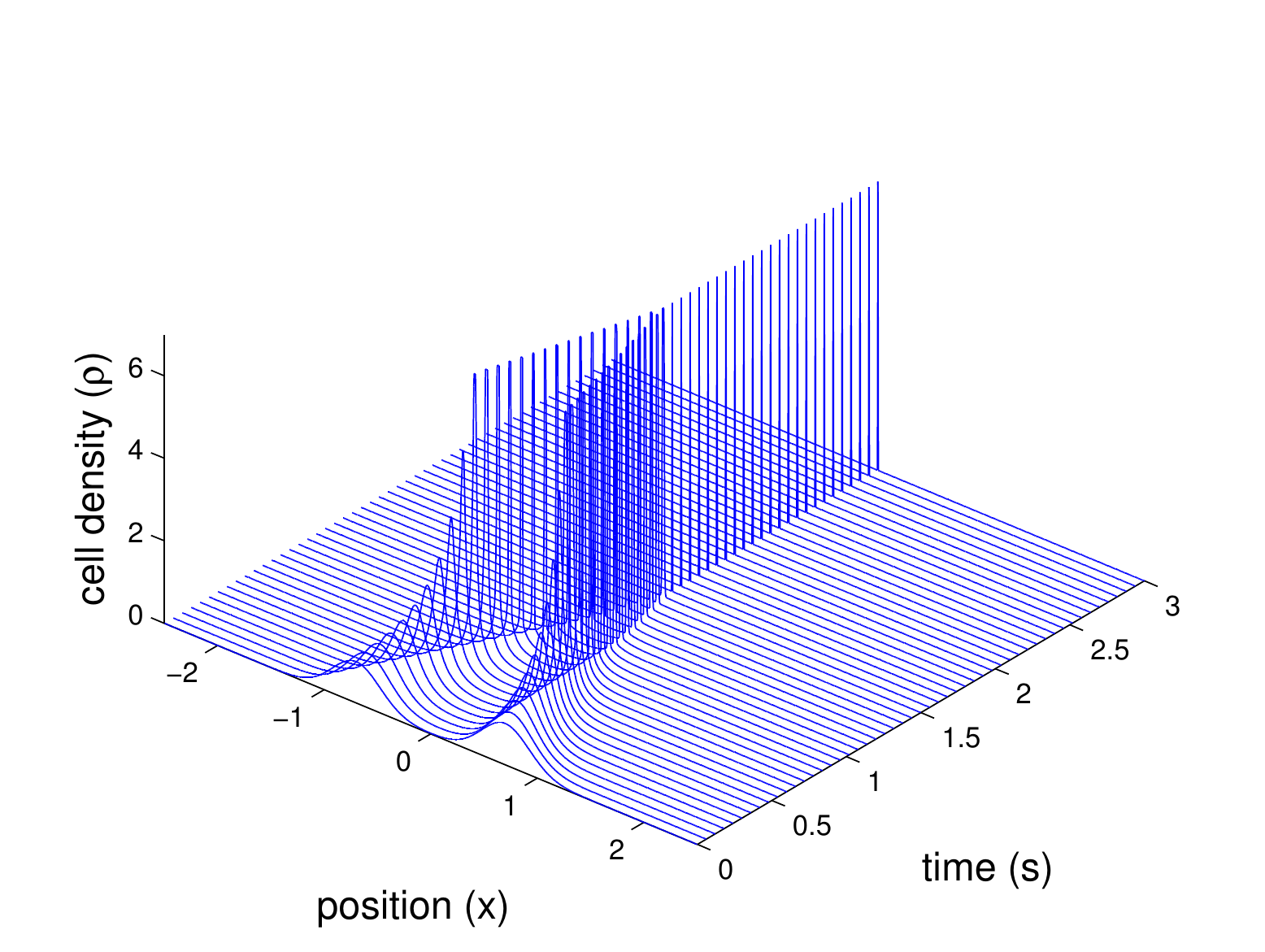}
  \includegraphics[width=2.47in]{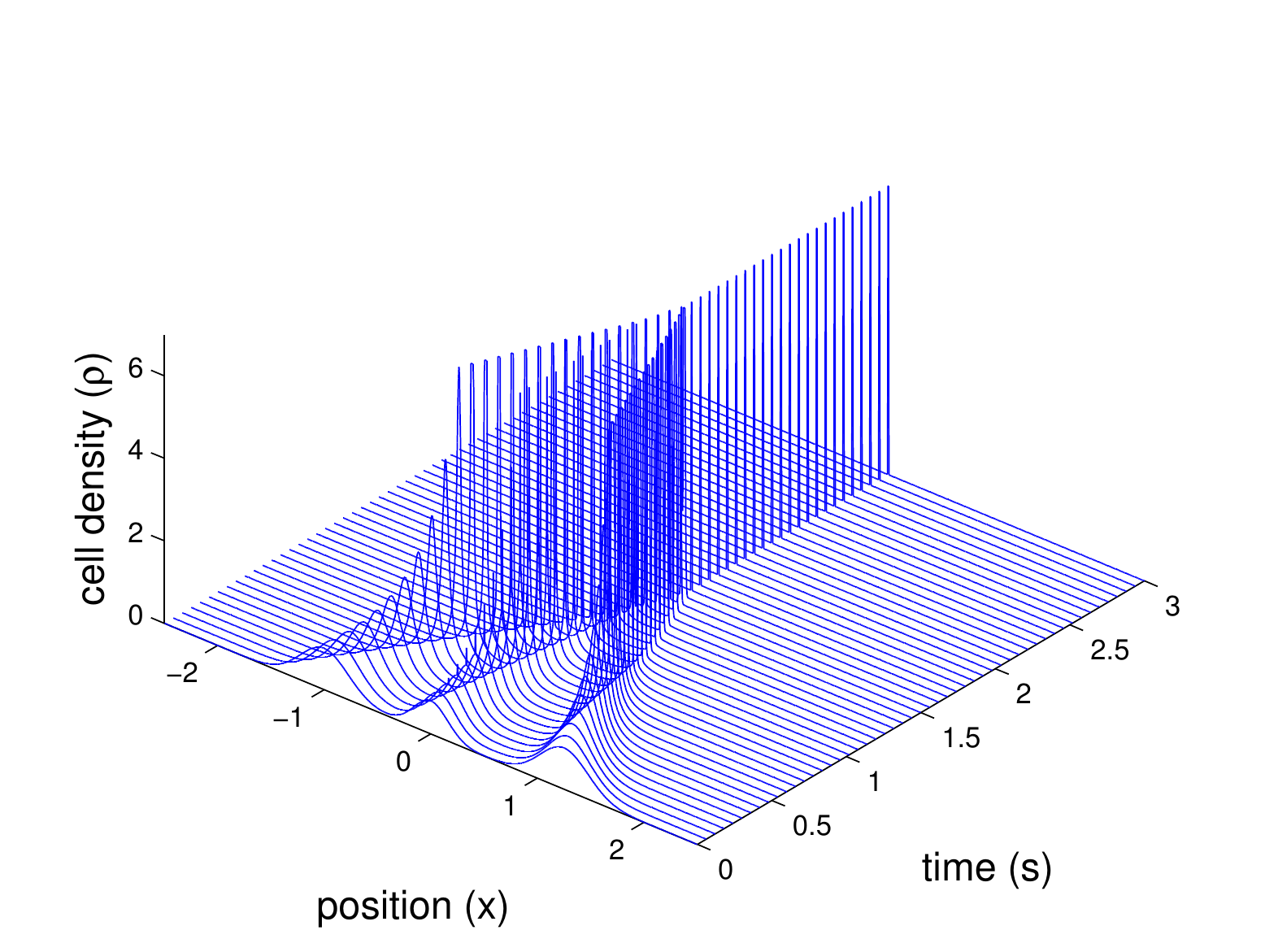}\\
  \caption{Dynamics of the density $\rho$ in the case $W=-\frac{|x|}{250}$
and when $a$ is linear.}\label{fig3}
\end{center}
\end{figure}



\end{document}